\newtheorem{thm}{Theorem}     
\newtheorem{prop}[thm]{Proposition}    
\journal{ArXiv.org}
\begin{document}

\begin{frontmatter}


\title{An upper bound on the Kolmogorov widths of a certain family of integral operators\tnoteref{label1}}
\tnotetext[label1]{This paper is dedicated to the memory of Eduard S.~Belinsky (1947--2004).}

\author{Duaine Lewis}
\ead{duaine.lewis@mycavehill.uwi.edu}

\author{Bernd Sing\corref{cor1}}
\ead{bernd.sing@cavehill.uwi.edu}
\ead[url]{http://www.bb-math.com/bernd}

\cortext[cor1]{Corresponding author}

\address{Department of Mathenmatics, The University of the West Indies, Cave~Hill, P.O.~Box~64, Bridgetown, St~Michael, BB11000, Barbados}

\begin{abstract}
We consider the family of integral operators $(K_{\alpha}f)(x)$ from $L^p[0,1]$ to $L^q[0,1]$ given by
$$(K_{\alpha}f)(x)=\int_0^1(1-xy)^{\alpha -1}\,f(y)\,\operatorname{d}\!y, \qquad 0<\alpha<1.$$
The main objective is to find upper bounds for the Kolmogorov widths, where the $n$th Kolmogorov width is the infimum of the deviation of $(K_{\alpha}f)$ from an $n$-dimensional subspaces of $L^p[0,1]$ (with the infimum taken over all $n$-dimensional subspaces), and is therefore a measure of how well $K_{\alpha}$ can be approximated. We find upper bounds for the Kolmogorov widths in question that decrease faster than $\exp(-\kappa \sqrt{n})$ for some positive constant $\kappa$.
\end{abstract}

\begin{keyword}
Kolmogorov widths \sep integral operator \sep entropy numbers

\MSC 47B06 \sep 46B28
\end{keyword}

\end{frontmatter}


\section{Introduction}

Integral operators acting on Hilbert spaces are classical objects studied in functional analysis as well as approximation theory. E.g., extending results and methods in  \cite{BS2,W}, Laptev \cite{Lap} considered the compactness properties and singular values of the integral operator $T: L^2[0,1]\to L^2[0,1]$ given by
$$ (Tf)(x) = \int_0^1 \frac{x^{\beta} y^{\gamma}}{(x+y)^{\alpha}}\,f(y)\,\operatorname{d}\!y,$$
where $\alpha>0$, $\beta,\gamma>-\frac12$ and $\beta+\gamma-\alpha+1>0$; it is shown that the singular values $\lambda_n$ behave like $\exp(-c\sqrt{n})$ for some positive contant $c$. Belinsky and Linde~\cite{BL} investigated the compactness properties of the integral operators $S_\alpha: L^p[0,1]\to L^q[0,1]$, i.e., an integral operator between more general spaces, given by
$$(S_{\alpha}f)(x)=\int_0^1(x+y)^{\alpha -1}\,f(y)\,\operatorname{d}\!y,$$
where $1\le p$,  $q\le \infty$ and $\alpha > (1/p - 1/q)_+=\max\{0,1/p - 1/q\}$. It is shown in \cite{K,BL} that the Kolmogorov widths of these operator $S_{\alpha}$ tend to zero faster than $\exp(-c_{\alpha}\sqrt{n})$ for some constant $c_{\alpha}=c_{\alpha}(p,q)>0$. Since $L^p$-spaces are in general not Hilbert spaces (for which singular values are defined only), Kolmogorov widths serve as a substitute for singular values.

In this paper we consider the integral operator $(K_{\alpha}f)(x): L^p[0,1]\to L^q[0,1]$ where 
\begin{equation}\label{eq:iop}
(K_{\alpha}f)(x):=\int_0^1(1-xy)^{\alpha -1}\,f(y)\,\operatorname{d}\!y, \qquad 0<\alpha<1.
\end{equation}
This integral operator belongs to several well-studied operator algebras: it is a Hilbert-Schmidt operator, a Schatten class operator for every $0<p<\infty$, and a bounded $L^p[0,1]$ operator for $1<p<\infty$, see Section~\ref{sec:properties}. The main purpose of the paper is to approximate this operator and obtaining upper bounds for the Kolmogorov widths, which we will explain next.

From the abstract point of view, approximation by polynomials or by trigonometric polynomials is a very special process. 
It is natural to try approximation by other systems of functions, compare \cite{DL,Kor,LGM}. For a given class of functions $A$, we can even try to find a `most favorable' system of approximation $A'$. We note that if $A$ consists of a single function, the degree (or error) of approximation  of the function $f$  is zero if $f$ itself is included in the system $A'$.  In general, let $X$ be a Banach space, and $A$ and $A'$ two subsets of $X$. The \emph{deviation} from $A$ to $A'$ is the number
$$E(A, A')_X=\sup_{f\in A}\{\inf_{g\in A'}\|f-g\|\}.$$
The deviation shows how well the ``worst" elements of $A$ can be approximated by $A'$. 

Now, let $A'=X_n$ be an $n$-dimensional subspace of $X$ spanned by the elements $\phi_1,\cdots,\phi_n$. The number $E(A,X_n)_{X}$
is the degree of approximation of the class $A$ by the set of linear combinations $a_1\phi_1+\cdots+a_n\phi_n$.  It was Kolmogorov's
idea \cite{L} to consider the infimum of the deviation for all $n$-dimensional subspaces $X_n$ of $X$:
The number
$$d_n(A,X)=\inf_{\substack{X_n\subset X,\\ \dim X_n=n}}E(A,X_n)_X, \hspace{10mm}   n=0,1,2,\cdots. $$
is called the \emph{Kolmogorov $n$-width} of $A$ in $X$. For a (compact) linear operator $K: X\to Y$ between Banach spaces $X$ and $Y$, we set
\begin{equation*}
d_n(K) = d_n(K: X\to Y) \ =\ d_n(K(B_X), Y),
\end{equation*}
where $B_X$ denotes the unit ball in $X$; i.e., the Kolmogorov $n$-width of an operator $K$ is the Kolmogorov $n$-width of the image of the unit ball under $K$.

Kolmogorov widths $d_n(A,X)$ measure the extent to which $A$ may be approximated by $n$-dimensional subspaces of $X$ and can thus help identifying optimal subspaces, see \cite{Pin86,Pin03} for an elementary introduction and the books \cite{LGM,Pietsch,Pietsch2,Pinkus,Tik90} for thorough expositions of Kolmogorov $n$-widths and overview of the various situations where they appear. In fact, Kolmogorov $n$-widths are an instance of the general class of $s$-numbers to which also \emph{Gel'fand numbers}, \emph{approximation numbers}, \emph{Hilbert numbers}, etc.\ belong, see \cite[Chapter 13]{LGM} and \cite[Chapter 11]{Pietsch} for details and the relationships between them.

As already mentioned above, Kolmogorov $n$-widths of an operator $K$ may serve as a substitute for singular values since they coincide with them in a Hilbert space.  To simplify matters, let $K: L^2[0,1]\to L^2[0,1]$ be a real-valued Hilbert-Schmidt operator with kernel $k(x,y)$ defined on $[0,1]\times[0,1]$, i.e., $(Kf)(x) = \int_0^1 k(x,y)\,f(y)\,\operatorname{d}\!y$, also compare Section~\ref{sec:properties}. Then the \emph{singular values} of the operator $K$ are defined as the square roots of the eigenvalues of the self-adjoint, non-negative, compact operator $(K'K)$ induced by the kernel $(k'k)(x,y)=\int_0^1 k(z,x)\,k(z,y)\,\operatorname{d}\!z$; i.e., if $(K'K)\varphi_i = \lambda_i\varphi_i$, $i=1,2,\ldots,$ where $\lambda_1\ge \lambda_2\ge \ldots$ enumerate the nonzero (positive) eigenvalues of $(K'K)$ with algebraic multiplicity, then the $i$-th singular value of $K$ is defined as $\sqrt{\lambda_i}$. In this situation, we have $d_n(K: L^2[0,1]\to L^2[0,1]) = \sqrt{\lambda_{n+1}}$, and an optimal $n$-dimensional subspace for $K(B_{L^2[0,1]})$ in $L^2[0,1]$ is spanned by $K\varphi_1,\ldots, K\varphi_n$, see \cite[Theorem I.2]{Pinkus}. Relationships between singular values and $s$-numbers in more general settings can be found in \cite{Pietsch,Pietsch2}, the survey article \cite{BS} specifically deals with estimates of $s$-numbers for integral operators.

The main result of this article are the following upper bounds on the Kolmogorov widths of the operators $K_{\alpha}$.

\begin{thm}\label{thm:main}
Let $K_{\alpha}$ be the integral operator 
$$(K_{\alpha}f)(x)= \int_0^1 (1-xy)^{\alpha -1} \,f(y)\,\operatorname{d}\!y\qquad\text{where}\quad 0<\alpha <1.$$ 
Then the Kolmogorov $n$-widths of $K_{\alpha}$ are asymptotically bounded as follows:
\begin{eqnarray*}
d_{n}\left(K_{\alpha}: L^p[0,1]\to L^{\infty}[0,1]  \right) & \le\ O\left( 2^{-\kappa_1\, \sqrt{n}} \right) & \text{if }\frac1{p}<\alpha<1, \\
d_{n}\left(K_{\alpha}: L^p[0,1]\to L^{r}[0,1]  \right) & \le\ O\left( 2^{-\kappa_2\, \sqrt{n}} \right) & \text{if }\alpha=\frac1{p},\ 1\le r<\infty, \\
d_{n}\left(K_{\alpha}: L^p[0,1]\to L^{r}[0,1]  \right) & \le\ O\left( 2^{-\kappa_3\,\sqrt{n}} \right) & \text{if }1-\frac1{r}<\alpha<\frac1{p},
\end{eqnarray*}
for some positive constants $\kappa_1<\frac1{\sqrt{2}}\left(\alpha-\frac1{p}\right)$, $\kappa_2<\frac1{\sqrt{2}}\,\frac1{r}$ and $\kappa_3<\frac1{\sqrt{2}}\,\left(1-\frac1{p}\right)$.
\end{thm}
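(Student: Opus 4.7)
The plan is to construct, for each target rank $n$, an explicit $n$-dimensional subspace $X_n \subset L^q[0,1]$ (or $L^r$, $L^\infty$) such that $K_\alpha(B_{L^p})$ is close to $X_n$, and exhibit an error decaying like $2^{-\kappa\sqrt{n}}$. The starting point is the binomial series
\[
(1-xy)^{\alpha-1}=\sum_{k=0}^{\infty}c_k\,(xy)^k,\qquad c_k=\frac{\Gamma(k+1-\alpha)}{k!\,\Gamma(1-\alpha)}\sim\frac{k^{-\alpha}}{\Gamma(1-\alpha)},
\]
which converges on $\{(x,y):xy<1\}$ but with a rate that degenerates as $xy\to 1$. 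Truncating at $N$ terms gives a rank-$N$ approximation $K_\alpha^{(N)}f(x)=\sum_{k<N}c_k x^k\int_0^1 y^k f(y)\,dy$ whose range lies in $\mathrm{span}\{1,x,\ldots,x^{N-1}\}$. A direct estimate of the moments $|\int_0^1 y^k f(y)\,dy|\le\|f\|_p\,(kp'+1)^{-1/p'}$ shows, however, that plain truncation only yields polynomial decay $\|K_\alpha-K_\alpha^{(N)}\|_{p\to\infty}\lesssim N^{1/p-\alpha}$ (valid precisely when $\alpha>1/p$). Exponential--in--$\sqrt{n}$ rates therefore require a more refined construction.

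The key refinement is a geometric (dyadic) decomposition near the singularity at $(1,1)$. Set $\delta_j=2^{-j}$ and consider the shells $R_j=\{(x,y)\in[0,1]^2: \delta_{j+1}\le 1-xy<\delta_j\}$, decomposing $K_\alpha=\sum_{j\ge 0}K_\alpha^{(j)}$ accordingly. On $R_j$ the kernel is a smooth function of $xy\in[1-\delta_j,1-\delta_{j+1}]$, with the nearest singularity of $(1-t)^{\alpha-1}$ at distance $\delta_{j+1}$ from an interval of length $\delta_{j+1}$; after rescaling, a classical Bernstein/Chebyshev estimate shows that $(1-t)^{\alpha-1}$ admits a polynomial approximation of degree $N_j$ on this interval with uniform error $\le C\,\delta_j^{\alpha-1}\,\rho^{-N_j}$ for a fixed $\rho=3+2\sqrt{2}>1$. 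This produces a rank-$N_j$ approximation $K_\alpha^{(j,N_j)}$ of $K_\alpha^{(j)}$ whose spanning functions are polynomials in $x$ of degree $\le N_j$ (the monomials $x^k y^k$ in each Taylor factor separate), and whose $L^p\to L^q$ norm error is controlled by the product of the kernel error, the support measure, and the Hölder pairing dictated by $(p,q)$.

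Balancing the budget, choose $J$ scales and allocate $N_j\equiv N_0$ polynomial degrees per scale, for total rank $n=J\cdot N_0$. Summing the per-shell errors gives an approximation error $\lesssim J\,\rho^{-N_0}$, while truncating the decomposition at the $J$-th scale leaves a tail $\sum_{j\ge J}\|K_\alpha^{(j)}\|_{p\to q}$. A direct norm estimate (using $u=1-xy$ as in $\|K_\alpha^{(j)}\|_{p\to\infty}\lesssim\delta_j^{\alpha-1/p}$ for the first regime, and the dual calculation involving $1-1/p$ in place of $\alpha-1/p$ for the third regime) bounds this tail by $2^{-J\mu}$, where $\mu\in\{\alpha-1/p,\,1/r,\,1-1/p\}$ in the three regimes, matching the positivity conditions stated in the hypothesis. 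Setting $J=c\sqrt{n}$ and $N_0=\sqrt{n}/c$ equates the two errors when $c^2=(\log_2\rho)/\mu$, producing a bound of the form $2^{-\sqrt{\mu\,\log_2\rho}\cdot\sqrt{n}}$, which for any choice of $c$ slightly larger than the optimum gives rate $2^{-\kappa\sqrt{n}}$ with any $\kappa<\mu/\sqrt{2}$ (using that $\log_2\rho\geq \mu/2$ is automatic after enlarging $\rho$ via a finer polynomial approximation).

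The main obstacle is the bookkeeping of the three regimes: one must verify that the polynomial approximation on each shell actually produces an operator whose range lies in the target space $L^q[0,1]$, that the tail of the dyadic sum converges in the appropriate norm, and that the constants $\kappa_i$ coming from balancing the tail exponent $\mu$ against the Chebyshev rate $\log_2\rho$ can be taken arbitrarily close to $\mu/\sqrt{2}$. The boundary case $\alpha=1/p$ is the most delicate, as the tail decay becomes logarithmic in the $L^\infty$ target and one must switch to $L^r$ with $r<\infty$ to recover a geometric tail with exponent $1/r$; the condition $\alpha>1-1/r$ in the third regime plays the symmetric role after a duality argument that shifts the singularity analysis from the target side ($y$-moments) to the source side ($x$-powers).
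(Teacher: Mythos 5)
Your outline founders on the rank count for the shell decomposition, and this is not a cosmetic issue. The shells $R_j=\{(x,y):\delta_{j+1}\le 1-xy<\delta_j\}$ are not product sets: for fixed $x$ the $y$-section is an interval whose endpoints depend on $x$. Consequently, replacing the kernel by a polynomial $P(xy)=\sum_k a_k x^ky^k$ \emph{on the shell} gives the operator $f\mapsto\sum_k a_k x^k\int_{\{y:(x,y)\in R_j\}}y^kf(y)\,\operatorname{d}\!y$, and the inner integrals are nonconstant functions of $x$; the range is therefore not contained in $\mathrm{span}\{1,x,\ldots,x^{N_j}\}$ and the operator $K_\alpha^{(j,N_j)}$ is not of rank $N_j$. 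The natural repair --- covering each hyperbolic shell by dyadic rectangles on which $1-xy$ is comparable to $2^{-j}$ --- costs roughly $j$ rectangles at scale $j$ (since $1-xy\approx(1-x)+(1-y)$ near the corner), so $J$ scales require $\sim J^2$ product pieces and the total dimension becomes $\sim J^2N_0$ rather than $JN_0$; balancing $J^2\rho^{-N_0}$ against $2^{-J\mu}$ then yields only $2^{-c\,n^{1/3}}$, not the claimed $2^{-\kappa\sqrt{n}}$.

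The paper's construction is arranged precisely to avoid this loss. After the substitution $u=1-x$, $v=1-y$ (which moves the singularity to the origin), only the output variable $u$ is partitioned dyadically into $n+1$ intervals, and on each $u$-interval the \emph{entire} inner integral over $v\in[0,1]$ is approximated by $O(n)$ basis functions ($1,u,\ldots,u^{n-1}$ together with $u^{-\beta},\ldots,u^{-\beta-n+1}$), using three different Taylor expansions according to whether $v$ is much smaller than, comparable to, or much larger than $u$. This keeps the total dimension at $2n^2+n$ for an error of order $2^{-\kappa n}$, which is what produces the exponent $\kappa/\sqrt{2}$ in $\sqrt{m}$ after inverting $m=2n^2+n$. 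Your tail estimates $\|K_\alpha^{(j)}\|_{p\to q}\lesssim\delta_j^{\mu}$ and the treatment of the three regimes (including the switch to $L^r$ when $\alpha=1/p$ and the constraint $\alpha>1-1/r$) are consistent with the paper's case analysis, but without a correct low-rank structure per scale the $\sqrt{n}$ rate does not follow from your argument.
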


This article is orgainzed as follows: In the next section, we consider some properties of the integral operators $K_{\alpha}$, establishing that they belong to various well-studied operator algebras. Section~\ref{sec:proof} contains the proof of Theorem \ref{thm:main}; it closely follows a corresponding proof for the integral operators $S_{\alpha}$ in Belinsky and Linde \cite{BL} mentioned above. We then add a remark on entropy numbers in Section~\ref{sec:entropy} which is an alternative way to measure the massivity of the set $K_{\alpha}(B_{L^p[0,1]})$. In Section~\ref{sec:example} we consider an example to make the approximation  obtained during the proof more concrete -- also compare Fig.~\ref{fig:case1n} and the Mathematica code in~\ref{sec:mathematica} -- before concluding the article with an outlook.

\section{Properties of the integral operators $K_\alpha$}

\subsection{Some properties}\label{sec:properties}

Recall that a \emph{Hilbert-Schmidt operator} $K$ is a linear operator on a Hilbert Space for which the Hilbert-Schmidt norm $\|K\|_2$ is finite, compare \cite[Theorem 3.1.5 \& Remark 3.1.6]{Z}, and also see \cite[Theorem 3.8.5]{Simon}. Considering the Hilbert space $L^2[0,1]$ here, we obtain the following result.

\begin{prop}\label{prop:HS1}
The integral operator in Eq.~\eqref{eq:iop} on $L^2[0,1]$ is a Hilbert-Schmidt operator with norm $\|K_\alpha\|_2^{}=\frac{\pi}{\sqrt{6}}$ if $\alpha=\frac12$ and $\|K_\alpha\|_2^{}=\sqrt{\frac{\gamma+\Psi(2\alpha)}{2\alpha-1}}$ otherwise, where $\gamma$ is the Euler-Mascheroni constant and $\Psi$ is the digamma function.
\end{prop}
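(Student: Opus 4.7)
The plan is to invoke the standard characterization of Hilbert--Schmidt operators on $L^2[0,1]$: the operator $K_\alpha$ with kernel $k_\alpha(x,y) = (1-xy)^{\alpha-1}$ is Hilbert--Schmidt iff $k_\alpha \in L^2([0,1]^2)$, in which case
$$\|K_\alpha\|_2^2 \;=\; \int_0^1\!\!\int_0^1 (1-xy)^{2\alpha-2}\, dx\, dy.$$
First I would check that this double integral is finite for every $0<\alpha<1$. The only potentially problematic point is the corner $(1,1)$; substituting $x=1-u$, $y=1-v$, one gets $1-xy = u+v-uv \sim u+v$, so in polar coordinates the integrand is $\sim r^{2\alpha-2}$ with Jacobian $r$, giving an integrable singularity for $\alpha>0$. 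So $k_\alpha \in L^2([0,1]^2)$ for all $\alpha$ in the stated range, and $K_\alpha$ is indeed Hilbert--Schmidt.

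Next I would evaluate the inner integral in $x$ by the substitution $u = 1-xy$:
$$\int_0^1 (1-xy)^{2\alpha-2}\, dx \;=\; \frac{1}{y}\int_{1-y}^1 u^{2\alpha-2}\, du.$$
The computation now splits into two cases because the antiderivative of $u^{2\alpha-2}$ changes form at $2\alpha-2=-1$. For $\alpha=\tfrac12$ the inner integral equals $-\ln(1-y)/y$, so
$$\|K_{1/2}\|_2^2 \;=\; \int_0^1 \frac{-\ln(1-y)}{y}\, dy \;=\; \sum_{n=1}^{\infty} \frac{1}{n^2} \;=\; \frac{\pi^2}{6},$$
which can be seen most cleanly by expanding $-\ln(1-y) = \sum_{n\ge 1} y^n/n$ and integrating term by term. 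This gives the first formula.

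For $\alpha\ne\tfrac12$ the inner integral equals $\bigl(1-(1-y)^{2\alpha-1}\bigr)/\bigl(y(2\alpha-1)\bigr)$, so
$$\|K_\alpha\|_2^2 \;=\; \frac{1}{2\alpha-1}\int_0^1 \frac{1-(1-y)^{2\alpha-1}}{y}\, dy.$$
Setting $t=1-y$ turns this into
$$\|K_\alpha\|_2^2 \;=\; \frac{1}{2\alpha-1}\int_0^1 \frac{1-t^{2\alpha-1}}{1-t}\, dt.$$
The main step, and the one requiring the right classical identity, is to recognise this integral as the standard integral representation of the digamma function,
$$\Psi(s) + \gamma \;=\; \int_0^1 \frac{1-t^{s-1}}{1-t}\, dt \qquad (\Re s > 0),$$
applied with $s = 2\alpha$. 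Substituting gives $\|K_\alpha\|_2^2 = (\gamma+\Psi(2\alpha))/(2\alpha-1)$, which yields the second formula of the proposition.

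The computation is essentially mechanical, so the only real obstacle is locating the digamma identity (and, for completeness, remarking briefly on the boundary behaviour of $k_\alpha$ when $\alpha$ is close to $0$). A minor consistency check one can mention is that both expressions agree in the limit $\alpha\to\tfrac12$: a Taylor expansion of $\Psi(2\alpha)$ around $\alpha=\tfrac12$ using $\Psi(1)=-\gamma$ and $\Psi'(1)=\pi^2/6$ gives $\lim_{\alpha\to 1/2}(\gamma+\Psi(2\alpha))/(2\alpha-1)=\pi^2/6$, matching the $\alpha=\tfrac12$ formula.
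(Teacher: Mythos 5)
Your proof is correct, and it reaches the result by a cleaner route than the paper does. The paper treats the double integral as an improper limit over $[0,s]\times[0,t]$ with $(s,t)\to(1,1)^-$, evaluates the $\alpha=\tfrac12$ case via the dilogarithm $\operatorname{Li}_2$, and in the $\alpha\neq\tfrac12$ case expands the inner integral into the generalized hypergeometric series ${}_3F_2(1,1,2-2\alpha;2,2;st)$ before passing to the limit and identifying the resulting series with $\frac{\gamma+\Psi(2\alpha)}{2\alpha-1}$. You instead justify finiteness once and for all by a local analysis of the singularity at $(1,1)$ (which, since the integrand is nonnegative, lets Tonelli handle the iterated integral without any limiting apparatus), and then both of your cases reduce to elementary one-dimensional integrals: $\int_0^1\frac{-\ln(1-y)}{y}\,dy=\sum_{n\ge1}n^{-2}$ for $\alpha=\tfrac12$, and Gauss's integral representation
\begin{equation*}
\Psi(s)+\gamma=\int_0^1\frac{1-t^{s-1}}{1-t}\,dt\qquad(\operatorname{Re}s>0)
\end{equation*}
with $s=2\alpha$ otherwise. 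Both arguments ultimately pivot on the same integral $\int_0^1\frac{1-t^{2\alpha-1}}{1-t}\,dt$ — the paper arrives at it as a limit of the ${}_3F_2$ and identifies it through its series expansion, while you quote the classical integral formula directly — so the essential content is the same digamma identity in two guises. What your version buys is brevity and avoidance of the special-function machinery ($\operatorname{Li}_2$ and ${}_3F_2$); what the paper's version buys is an explicit closed form for the truncated integral over $[0,s]\times[0,t]$, which is of some independent interest but not needed for the norm itself. Your closing consistency check at $\alpha\to\tfrac12$ using $\Psi(1)=-\gamma$ and $\Psi'(1)=\pi^2/6$ is a nice touch that the paper only gestures at by remarking on continuity at $\alpha=\tfrac12$.
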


\begin{proof}
We first note that the double integral in question is improper, i.e., 
\begin{eqnarray*}
\int_0^1 \int_0^1\left((1-xy)^{\alpha -1}\right)^2 \operatorname{d}\!x\,\operatorname{d}\!y &  = & \lim_{(s,t)\to (1,1)^-} \int_0^s\int_0^t (1-xy)^{2\,\alpha -2} \operatorname{d}\!x\,\operatorname{d}\!y. 
\end{eqnarray*}
We calculate that
$$ \int_0^t (1-xy)^{2\,\alpha -2} \operatorname{d}\!x = \begin{cases}
t & \mbox{if } y=0, \\
\frac{1-(1-t\,y)^{2\,\alpha-1}}{(2\,\alpha-1)\,y} & \mbox{if } 0<y<1 \mbox{ and } \alpha\neq \frac12 ,\\
-\frac{\log(1-t\,y)}{y} &  \mbox{if } 0<y<1 \mbox{ and } \alpha = \frac12,
\end{cases}$$
and note that this function is continuous at $y=0$.

In the $\alpha=\frac12$ case, we have 
$$  \int_0^s -\frac{\log(1-t\,y)}{y} \operatorname{d}\!y =  \int_0^{s\,t} -\frac{\log(1-u)}{u} \operatorname{d}\!u = \operatorname{Li}_2(s\,t),$$
using the dilogarithm $\operatorname{Li}_2(z)$ defined either by $ \operatorname{Li}_2(z) =\int\limits_0^z -\frac{\log(1-u)}{u}\,\operatorname{d}\!u$ or by the series expansion $\operatorname{Li}_2(z) = \sum\limits_{k=1}^{\infty} \frac{z^k}{k^2}$ for $|z|\le 1$  (where we note that $\operatorname{Li}_2(1)=\zeta(2)=\frac{\pi^2}6$), see \cite[\S 25.12(i)]{NIST}. Consequently, in this case we have 
$$\int_0^1 \int_0^1\left((1-xy)^{\alpha -1}\right)^2 \operatorname{d}\!x\,\operatorname{d}\!y = \lim\limits_{(s,t)\to(1,1)^-} \operatorname{Li}_2(s\,t)=\operatorname{Li}_2(1)
= \frac{\pi^2}6.$$

In the $\alpha\neq\frac12$ case, we have
$$ \int_0^s \frac{1-(1-t\,y)^{2\,\alpha-1}}{(2\,\alpha-1)\,y} \operatorname{d}\!y =   s t\cdot\int_0^1 \frac{1-(1-s t\,u)^{2\,\alpha-1}}{(2\,\alpha-1)\,st\,u} \operatorname{d}\!u =s\,t\cdot \mbox{}_3F_2(1,1,2-2\alpha;2,2;s\,t),$$
using the negative binomial series (with non-integral exponent) and the generalized hypergeometric function ${}_3F_2(1,1,2-2\alpha;2,2;z)$ defined by the series expansion
$$ {}_3F_2(1,1,2-2\alpha;2,2;z) = 1+\sum_{k=1}^{\infty}\frac{(2-2\alpha)(3-2\alpha)\cdots(k+1-2\alpha)}{k+1} \frac{z^k}{(k+1)!}$$ which is converging for $|z|\le 1$ and diverging for $|z|>1$ (it is absolutely convergent on $|z|=1$), see \cite[Section 44]{Rai60}.
In the limit $(s,t)\to (1,1)^-$, we obtain
\begin{multline*}  
\lim_{(s,t)\to (1,1)^-} s t \cdot\int_0^{1} \frac{1-(1-s t \,u)^{2\,\alpha-1}}{(2\,\alpha-1)\,s t \,u} \operatorname{d}\!u 
= \int_0^{1} \frac{1-v^{2\,\alpha-1}}{(2\,\alpha-1)\,(1-v)} dv =\\ =  1+\sum_{k=1}^{\infty}\frac{(2-2\alpha)(3-2\alpha)\cdots(k+1-2\alpha)}{k+1} \frac1{(k+1)!}  = \frac{\gamma+\Psi(2\alpha)}{2\alpha-1}
\end{multline*}
where $\gamma\approx 0.577\,215\,664\,\ldots $ is the Euler-Mascheroni constant, and $\Psi$ is the digamma function (also known as psi function) defined by $\Psi(z)=\frac{d}{dz}\log\Gamma(z)$ (the logarithmic derivative of the gamma function), see \cite[Formula 5.9.16]{NIST}.

Overall, we therefore have (note that this is continuous at $\alpha=\frac12$)
\begin{eqnarray*}
\|K_{\alpha}\|_2^2\ =\ \int_0^1 \int_0^1\left((1-xy)^{\alpha -1}\right)^2 \operatorname{d}\!x\,\operatorname{d}\!y  & = & \begin{cases} \frac{\pi^2}6 & \mbox{if } \alpha=\frac12,\\  \frac{\gamma+\Psi(2\alpha)}{2\alpha-1} & \mbox{if } 0<\alpha<1,\  \alpha\neq \frac12,\end{cases} 
\end{eqnarray*}
which is finite for $\alpha\in(0,1)$ not least since the digamma function is holomorphic on\linebreak $\mathbb{C}\setminus\{0,-1,-2,-3,-4,\ldots\}$, compare \cite[\S 5.2(i)]{NIST}.
\end{proof}

While the previous proposition yields the Hilbert-Schmidt norm of the integral operator in question, it is actually easy to obtain an upper bound on the norm in any $L^p$ space using \emph{Schur's theorem}, see \cite[Section \textsection 3.2]{Z}, and thus showing that it is a bounded integral operator.

\begin{prop}\label{prop:HS2} 
The integral operator in Eq.~\eqref{eq:iop} is bounded on $L^p[0,1]$ for $1<p<+\infty$ with norm less than or equal to $1/\alpha$.
\end{prop}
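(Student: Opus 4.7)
The plan is to apply Schur's theorem (the Schur--Young inequality) with the constant weight function. Since the kernel $k(x,y)=(1-xy)^{\alpha-1}$ is non-negative on $[0,1]\times[0,1]$, it suffices to verify that both row and column integrals of the kernel are uniformly bounded by $1/\alpha$; the Schur bound will then give operator norm at most $(1/\alpha)^{1/p'}(1/\alpha)^{1/p}=1/\alpha$, with $p'$ the Hölder conjugate of $p$.

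First I would compute explicitly
\[
\int_0^1 (1-xy)^{\alpha-1}\,\operatorname{d}\!y = \frac{1-(1-x)^{\alpha}}{\alpha\,x},
\]
for $0<x\le 1$ (with value $1$ at $x=0$ by continuity), and by the symmetry of the kernel $k(x,y)=k(y,x)$ the same formula (with $x$ and $y$ interchanged) gives the integral in the other variable. Note that the singularity at $xy=1$ is integrable because $\alpha>0$.

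Next I would show that $\frac{1-(1-x)^{\alpha}}{\alpha\,x}\le \frac{1}{\alpha}$ for every $x\in[0,1]$. This reduces to the elementary inequality $(1-x)^{\alpha}\ge 1-x$, which holds because $1-x\in[0,1]$ and $0<\alpha<1$, so raising to a smaller power increases the value. Hence
\[
\sup_{x\in[0,1]}\int_0^1 (1-xy)^{\alpha-1}\,\operatorname{d}\!y \;\le\;\frac{1}{\alpha}, \qquad \sup_{y\in[0,1]}\int_0^1 (1-xy)^{\alpha-1}\,\operatorname{d}\!x \;\le\;\frac{1}{\alpha}.
\]

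Finally I would invoke Schur's theorem in the form cited from \cite[\textsection 3.2]{Z}: for a non-negative kernel with both marginals bounded by $M$, the induced integral operator is bounded on $L^p[0,1]$ for every $1<p<\infty$ with $\|K_\alpha\|_{p\to p}\le M^{1/p'}\cdot M^{1/p}=M=1/\alpha$. There is no real obstacle; the only mild subtlety is checking the elementary inequality $(1-x)^\alpha\ge 1-x$ that produces the clean bound $1/\alpha$ rather than an $x$-dependent estimate.
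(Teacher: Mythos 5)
Your proposal is correct and follows essentially the same route as the paper: compute the marginal $\int_0^1(1-xy)^{\alpha-1}\,\operatorname{d}\!y=\frac{1-(1-x)^{\alpha}}{\alpha x}$, bound it by $1/\alpha$, and invoke Schur's theorem from \cite[\S 3.2]{Z}. The only (minor, and arguably preferable) difference is that you bound the marginal via the elementary inequality $(1-x)^{\alpha}\ge 1-x$ for $1-x\in[0,1]$ and $0<\alpha<1$, whereas the paper asserts without detail that the marginal is increasing on $(0,1)$ and so attains its maximum $1/\alpha$ at $x=1$.
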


\begin{proof}
We have
$$\int_0^1 (1-xy)^{\alpha -1} \operatorname{d}\!y =\begin{cases}  \frac{1-(1-x)^{\alpha}}{\alpha\,x} & \text{if } x>0, \\ 1& \text{if } x=0,\end{cases}$$
this function is continuous at $x=0$ and thus on $[0,1]$. Furthermore, one can show that this function is increasing on $(0,1)$ (for $0<\alpha<1$). Thus, it attains its maximum at $x=1$ and we have
$$0\le \int_0^1 (1-xy)^{\alpha -1} \operatorname{d}\!y \le \frac1{\alpha}.$$
The claim now follows directly from \cite[Theorem 3.2.2]{Z} (also compare \cite[Corollary 3.2.3]{Z} for the $L^2$ case).
\end{proof}

Having established that $K_{\alpha}$ is a Hilbert-Schmidt operator, we look at the following generalisation which connects integral operators with sequence spaces, compare \cite[Section \textsection 1.4]{Z}: Consider a bounded linear operator $K$ on the separable Hilbert space $L^2[0.1]$. Let $\lambda_n$ be the \emph{$n$-th singular value} of $K$ where $\lambda_1\ge \lambda_2\ge \lambda_3\ge \ldots$ (we note that $K_{\alpha}$ is a positive, self-adjoint operator, so that the singular values are the eigenvalues of $K_{\alpha}$). We say that an operator $K$ belongs to the \emph{Schatten $p$-class} if $\left(\sum_{n\ge 1} \lambda_n^p\right) < \infty$ (for $0<p<\infty$); in particular, for $1\le p<\infty$, we can define the \emph{Schatten $p$-norm} of $K$ by
\begin{equation*}
\|K\|_{S^p} = \left(\sum_{n\ge 1} \lambda_n^p\right)^{1/p},
\end{equation*}
i.e.,\ by the $\ell^p$-norm on the sequence of singular values (for $0<p<1$ this only yields a quasinorm); in this case, $K$ belongs to the Schatten $p$-class iff its Schatten $p$-norm is finite. Note that $\|K\|_2 = \|K\|_{S^2}$ and thus a Schatten $2$-class operator is also said to belong to the \emph{Hilbert-Schmidt class}. Also note that an operator $K$ belonging to the Schatten $1$-class is a \emph{trace class operator}. Further details can be found in \cite[Sections 3.7--3.8]{Simon}.

We now remark that $K$ actually belongs to the Schatten class $S_p$ for every $0<p<\infty$. This is an immediate from \cite[Theorem 4]{P}, a variant of the so-called Luecking Theorem in \cite{Lue}. 

\begin{prop}\label{prop:KSchatten}
Let $K_\alpha$ be the integral operator in $L^2[0,1]$ given in Eq.~\eqref{eq:iop}.
Then the operator $K_{\alpha}$ belongs to the Schatten class $S_p$ for every $0<p<\infty$.\qed
\end{prop}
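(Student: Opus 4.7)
The plan is to invoke the cited result [P, Theorem 4] directly, so the work amounts to recalling its statement and verifying that its hypotheses are met by our kernel $k(x,y)=(1-xy)^{\alpha-1}$.

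First, I would recall the precise statement of [P, Theorem 4]. Being a variant of the Luecking theorem [Lue], it provides a sufficient condition for an integral operator on $L^2[0,1]$ to lie in every Schatten class $S_p$, $0<p<\infty$, phrased (as is typical for such results) in terms of controlled local behaviour of the kernel and its derivatives over a dyadic decomposition adapted to the singular set of $k$.

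Second, I would verify that the kernel $k(x,y)=(1-xy)^{\alpha-1}$ meets these hypotheses. The kernel is real-analytic on $[0,1]^2\setminus\{(1,1)\}$, and its only singularity sits at the corner $(1,1)$. Using the elementary two-sided estimate
$$\max(1-x,\,1-y)\ \le\ 1-xy\ \le\ 2\max(1-x,\,1-y)\qquad\text{for }(x,y)\in[0,1]^2,$$
the kernel and all its mixed partial derivatives $\partial_x^{j}\partial_y^{\ell}k$ are controlled by powers of the distance to $(1,1)$ with exponent $\alpha-1-j-\ell$. Since $\alpha-1>-1$, these bounds are integrable, and introducing a dyadic decomposition of $[0,1]^2$ adapted to the corner $(1,1)$ yields local $L^2$-means (or appropriate local seminorms) that decay geometrically with the cell index.

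The main obstacle is cosmetic rather than substantive: matching our kernel to the exact normalization and dyadic framework demanded by [P, Theorem 4], and checking that the resulting sequence of local quantities lies in $\ell^{p/2}$ for every $p>0$. Because the dyadic cells shrink geometrically while the kernel's singularity is only polynomial of integrable order, this summability holds for all $p\in(0,\infty)$ without further effort, establishing $K_\alpha\in S_p$ as claimed.
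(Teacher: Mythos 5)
Your proposal takes essentially the same route as the paper: the paper gives no argument beyond asserting that the claim is immediate from \cite[Theorem 4]{P} (a variant of the Luecking theorem), which is exactly the citation you propose to invoke. Your additional sketch of verifying the hypotheses (the two-sided bound $\max(1-x,1-y)\le 1-xy\le 2\max(1-x,1-y)$ and the dyadic summability near the corner $(1,1)$) goes beyond what the paper records and is plausible, though it rests on a guessed formulation of Parfenov's theorem rather than its actual statement.
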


\subsection{Re-writing the integral operators}

We now look at the integral operator 
$$(K_{\alpha}f)(x)=\int_0^1 (1-xy)^{-\alpha -1}\,f(y)\,\operatorname{d}\!y.$$
For this integral operator $(K_{\alpha}f)(x)$ we let $-\beta=\alpha -1$, thus $0<\beta<1$, and the operator becomes
$$(K_{1-\beta}f)(x)=\int_0^1 (1-xy)^{-\beta}\,f(y)\,\operatorname{d}\!y.$$
We also change variables: Let $u=1-x$ and $v=1-y$ (therefore $-\operatorname{d}\!y=\operatorname{d}\!v$); then the operator becomes
$$(K_{1-\beta}f)(1-u)=\int_0^1\frac{f(1-v)}{(u+v-uv)^{\beta}}\,\operatorname{d}\!v.$$
Using the notation $\widetilde{f}(z)=f(1-z)$, this can be written as
$$\widetilde{(K_{1-\beta}f)}(u)=\int_0^1\frac{\widetilde{f}(v)}{(u+v-uv)^{\beta}}\,\operatorname{d}\!v.$$
%
%
%

Note the the kernel $\frac1{(u+v-uv)^{\beta}}$ of this integral operator has a singularity at $(0,0)$; it is defined for all $(u,v)\in [0,1]\times[0,1]\setminus\{(0,0)\}$, and we have $\lim\limits_{(u,v)\to(0,0)^+} \frac{1}{(u+v-uv)^{\beta}} = +\infty$.

\section{Proof of Theorem 1}\label{sec:proof}

\subsection{Outline of the Proof}

We approximate $\widetilde{(K_{1-\beta}f)}$ by a rational function of order $n$ (i.e., degree $n-1$) on a partition of $[0,1]$ into finitely many (namely, $n+1$ many) intervals. More precisely, we will approximate $u\mapsto\widetilde{(K_{1-\beta}f)}(u)$ for $u$ on each of the intervals $\left( 2^{-k-1},2^{-k} \right]$ for $k=0,\ldots,n-1$ and the interval $\left[0,2^{-n}\right]$, i.e.,\ the endpoints of the intervals used here are the dyadic fractions $2^{-k}$ for $k=0,\ldots, n-1$, and $0$, see Figure~\ref{fig:dyadic} for the case $n=4$. 

\medskip

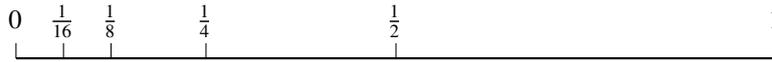
\begin{figure}[htp]
{\setlength{\unitlength}{1cm}
\centerline{\begin{picture}(11,.5)
{\thicklines%
{%
\put(.5,0){\line(1,0){10}}
}}%
\put(.5,0){\line(0,1){.2}}
\put(1.125,0){\line(0,1){.2}}
\put(1.75,0){\line(0,1){.2}}
\put(3,0){\line(0,1){.2}}
\put(5.5,0){\line(0,1){.2}}
\put(10.5,0){\line(0,1){.2}}
\put(0.4,.4){$0$}
\put(0.94,.4){$\frac1{16}$}
\put(1.635,.4){$\frac18$}
\put(2.875,.4){$\frac14$}
\put(5.375,.4){$\frac12$}
\put(10.4,.4){$1$}
\end{picture}}}
\caption{Intervals in the case $n=4$ used the calculations.\label{fig:dyadic}}
\end{figure}

To this end, we first consider the case $u\in \left[0,2^{-n}\right]$ in Section~\ref{subsec:case1} where we split $\widetilde{(K_{1-\beta}f)}(u)$ into two parts, namely the integral from $0$ to $2^{-(n-1)}$ (the leftmost interval) and the integral from $2^{-(n-1)}$ to $1$. Next, in Section~\ref{subsec:case2}, we consider the case $u\in\left( 2^{-k-1},2^{-k} \right]$ for some $k=0,\ldots,n-1$ where the integral  $\widetilde{(K_{1-\beta}f)}(u)$ is (typically) split into three parts: the integral from $0$ to $2^{-(n-1)}$ (the leftmost interval),  the integral over the interval $\left( 2^{-k-1},2^{-k} \right]$ the variable $u$ falls into, and the integral over the remaining $n-1$ intervals of the form $\left( 2^{-k-1},2^{-k} \right]$. 

Depending on the three cases $0<\beta q <1$, $\beta q=1$ and $\beta q>1$, we then consider the error of approximation made in Sections \ref{subsec:error1}, \ref{subsec:error2}, respectively \ref{subsec:error3}. Since our approximation is of dimension $2n^2+n$, see Section~\ref{subsec:dim}, this then establishes the main theorem.

We first start with some result we will frequently use in what follows.

\subsection{A Taylor series and an asymptotic result}

Besides H\"{o}lder's inequality and the generalized version of Bernoulli's inequality, we will often make use of the Taylor series for $(1+x)^{-\beta}$, 
$$
(1+x)^{-\beta} = \sum_{j=0}^{\infty} (-1)^j \frac{\Gamma(\beta+j)}{\Gamma(\beta)\cdot\Gamma(j+1)}\, x^j,
$$
which converges for $|x|<1$. Thus, the partial sum with remainder term of  degree $n$ (we choose the Lagrangian form of the remainder term here) is given by
\begin{equation}\label{eq:taylor1}
 (1+x)^{-\beta} = \left(\sum_{j=0}^{n-1} (-1)^j \frac{\Gamma(\beta+j)}{\Gamma(\beta)\cdot\Gamma(j+1)}\, x^j\right) + (-1)^n\,\frac{\Gamma(\beta+n)}{\Gamma(\beta)\,\Gamma(n+1)}\,(1+\theta)^{-\beta-n}\,x^n,
\end{equation}
where the real number $\theta$ is between $0$ and $x$ (i.e., either $\theta\in(0,x)$ if $x>0$ or $\theta\in(x,0)$ if $x<0$), and the Taylor polynomial of degree $(n-1)$ is given by
$$ P_{n-1}(x) = \sum_{j=0}^{n-1} (-1)^j \frac{\Gamma(\beta+j)}{\Gamma(\beta)\cdot\Gamma(j+1)}\, x^j.$$

In the following, we will also need the asymptotic behaviour of $\frac{\Gamma(\beta+n)}{\Gamma(n+1)}$. From the asymptotic formula 
$$ \frac{\Gamma(z+a)}{\Gamma(z+b)} = z^{a-b}\left(1+\frac{(a-b)(a+b-1)}{2z}+O(|z|^{-2})\right)$$
for arbitrary constants $a,b$ and $|\arg(z)|<\pi-\delta$ for some $0<\delta\ll 1$ (see \cite[p.~15]{Lebedev}), we get
\begin{equation}\label{eq:Gammasym}
 \frac{\Gamma(n+\beta)}{\Gamma(n+1)} = n^{\beta-1} \left[1-\frac{\beta(1-\beta)}{2n}+O(n^{-2}) \right]. 
\end{equation}

\subsection{Approximation if $u\in\left[0,2^{-n}\right]$.\label{subsec:case1}}

We first  consider the case where $u$ in $\widetilde{(K_{1-\beta}f)}(u)$ falls into the leftmost interval; in this case, we write $\widetilde{(K_{1-\beta}f)}(u)$ as a sum of two terms:
$$\widetilde{(K_{1-\beta}f)}(u)=\int_0^{2^{-(n-1)}} \frac{f(v)}{(u+v-uv)^{\beta}}\,\operatorname{d}\!v +\int_{2^{-(n-1)}}^1 \frac{f(v)}{(u+v-uv)^{\beta}}\,\operatorname{d}\!v.$$

For the first of these two integrals, i.e., $\int_0^{2^{-(n-1)}} (u+v-uv)^{-\beta}\,f(v)\,\operatorname{d}\!v$, we first apply H\"{o}lder's inequality together with the integral estimate $\left(\int_0^x |f(v)|^p\,\operatorname{d}\!v\right)^{1/p} \le \|f\|_p$ for any $0\le x\le 1$, to obtain 
\begin{multline*}
\left| \int\limits_0^{2^{-(n-1)}} \frac{f(v)}{(u+v-uv)^{\beta}}\,\operatorname{d}\!v \right| 
 \  \le\ \|f\|_p\cdot \left(\int\limits_0^{2^{-(n-1)}} \frac{\operatorname{d}\!v}{(u+v-uv)^{\beta q}}\right)^{1/q}
\\ = \left\{ \begin{array}{ll}
\|f\|_p \left(\frac{\ln\left(1+\frac{1-u}{2^{n-1} u}\right)}{1-u}\right)^{1/q} & \text{if } q\beta=1,\\
\|f\|_p \left(\frac{\left[u+\frac{1-u}{2^{n-1}}\right]^{1-q\beta} -u^{1-q\beta}}{(1-q\beta)(1-u)}\right)^{1/q} & \text{if } q\beta\neq1,\\
\end{array}\right\}
\stackrel{(\star)}{\le}
  \begin{cases} 
O\left(\| f\|_p\cdot 2^{-n\left(\frac1{q}-\beta\right)}\right) &  \text{if } 0<q\,\beta<1,\\
O\left(\| f\|_p\cdot \left(\frac{\ln 3-\ln\left({2^{n}\,u}\right)}{1-2^{-n}} \right)^{1/q}\right) &  \text{if } q\,\beta = 1, \\
O\left(\| f\|_p\cdot 2^{-n/q}\,u^{-\beta}\right) & \text{if } q\,\beta > 1.\\
\end{cases}\\
\end{multline*}
In step $(\star)$ we used the generalized version of Bernoulli's inequality (namely that for $h>-1$ we have $(1+h)^r\le 1+r\,h$ if $0\le r\le 1$ and $(1+h)^r\ge 1+r\,h$ if $r\le 0$ or $r\ge 1$) together with some straightforward estimates.

 The second integral, i.e., $\int_{\frac1{2^{n-1}}}^1\frac{f(v)}{(u+v-uv)^\beta}\,\operatorname{d}\!v$, is represented in the form
$$\int_{\frac1{2^{n-1}}}^1\frac{f(v)}{v^\beta (1+{\frac{u}{v}}-u)^\beta}\,\operatorname{d}\!v=\int_{\frac1{2^{n-1}}}^1\frac{f(v)}{v^\beta \left[1+u\,\left(\frac1{v}-1\right)\right]^\beta}\,\operatorname{d}\!v.$$
Approximating $\left[1+u\,\left(\frac1{v}-1\right)\right]^{\beta}$ by partial sums of the Taylor series above in Eq.~\eqref{eq:taylor1} (where we set $x=u\,\left(\frac1{v}-1\right)=\left(\frac{u}{v}-u\right)$), we obtain
\begin{multline*}
\left|\,\int\limits_{\frac1{2^{n-1}}}^1 f(v)\,v^{-\beta}\,\left[(1+u(1/v-1)]^{-\beta}-P_{n-1}(u(1/v-1)\right]\,\operatorname{d}\!v\right | \\
= \frac{\Gamma(\beta +n)}{\Gamma(\beta)\Gamma(n+1)}\, \left|\,\int\limits_{\frac1{2^{n-1}}}^1 f(v)v^{-\beta}\,\frac{u^n \left(\frac1{v}-1\right)^n}{(1+\theta)^{\beta+n}}\,\operatorname{d}\!v\right|
\le\frac{\Gamma(\beta +n)}{\Gamma(\beta)\Gamma(n+1)}\cdot{\frac1{2^n}}\, \left|\,\int\limits_{\frac1{2^{n-1}}}^1 f(v)v^{-\beta}\,\operatorname{d}\!v\right|,
\end{multline*}
where in the last step we used that $\theta>0$ and $\left|u\left(\frac1{v}-1\right)\right| \le \frac1{2^n} (2^{n-1}-1) < \frac12$.

For the integral in this last expression we obtain by H\"{o}lder's inequality (together with some other straightforward estimates) that 
\begin{multline*}
\left |\int_{\frac1{2^{n-1}}}^1 f(v)v^{-\beta}\,\operatorname{d}\!v\right| \le   \left\{ \begin{array}{ll} 
\|f\|_p \left( (n-1) \ln 2\right)^{1/q} & \text{if } \beta q=1,\\
\|f\|_p \,\left[ \frac{1-2^{-(n-1)(1-\beta q)}}{1-\beta q}\right]^{1/q} & \text{if } \beta q\neq 1,
\end{array} \right\} \\
\le \begin{cases}
\left[\frac{1}{1-\beta q}\right ]^{\frac1{q}}\,\|f\|_p & \text{if } 0<\beta q<1,\\
\left(\ln 2\right)^{\beta}\,\|f\|_p\, n^{\beta} & \text{if } \beta q= 1,\\
\frac1{(\beta q-1)^{\frac1{q}}}\,\|f\|_p\,{2^{n\left(1-\frac1{q}\right)}} & \text{if } \beta q >1,\\
\end{cases}
\end{multline*}
where for the last inequality we note that if $\beta q>1$ then $0<\beta-\frac1{q}<1-\frac1{q}\le 1$. Thus, we have
$$ \left |\int_{\frac1{2^{n-1}}}^1 f(v)v^{-\beta}\,\operatorname{d}\!v\right| \ \le \ 
 \begin{cases}
\mbox{const}\cdot\|f\|_p & \text{if } 0<\beta q<1,\\
\mbox{const}\cdot\|f\|_p\, n^{\beta} & \text{if } \beta q= 1,\\
\mbox{const}\cdot\|f\|_p\,{2^{n\left(1-\frac1{q}\right)}} & \text{if } \beta q >1.\\
\end{cases} $$

Therefore, using the asymptotic formula in Eq.~\eqref{eq:Gammasym}, we overall get
$$\left |\int_{\frac1{2^{n-1}}}^1 f(v)\,v^{-\beta}\,\left[(1+u(1/v-1)]^{-\beta}-P_{n-1}(u(1/v-1)\right]\,\operatorname{d}\!v\right |\le \begin{cases}
O\left(\| f\|_p\cdot \frac{n^{\beta-1}}{2^n}\right) & \text{if } 0<\beta q<1,\\
O\left(\| f\|_p\cdot \frac{n^{2\,\beta-1}}{2^n}\right) & \text{if } \beta q= 1,\\
O\left(\| f\|_p\cdot \frac{n^{\beta-1}}{2^{n/q}}\right) & \text{if } \beta q >1.\\
\end{cases}$$
Since $\beta-1<0$ and since the exponential $2^n$ grows faster than any power of $n$, this establishes the order with which 
$$ \left |\int_{\frac1{2^{n-1}}}^1 f(v)\,v^{-\beta}\,\left[(1+u(1/v-1)]^{-\beta}-P_{n-1}(u(1/v-1)\right]\,\operatorname{d}\!v\right | \ \to \ 0$$
as $n\to \infty$.



\subsection{Approximation if $u\in \left[2^{-(k+1)},2^{-k}\right]$ with $k=0,\ldots,n-1$.\label{subsec:case2}}
 
We now consider the case where $u$ in $\widetilde{(K_{1-\beta}f)}(u)$ does not fall into the leftmost interval; in this case,  we decompose $\widetilde{(K_{1-\beta}f)}(u)$ into two or three parts as follows:
\begin{equation*}
\widetilde{(K_{1-\beta}f)}(u)=\int\limits_0^1\frac{f(v)}{(u+v-uv)^{\beta}}\operatorname{d}\!v 
=
\begin{cases}
\int\limits_0^{2^{-(k+2)}} \frac{f(v)}{(u+v-uv)^{\beta}}\, \operatorname{d}\!v +\int\limits_{2^{-(k+2)}}^1  \frac{f(v)}{(u+v-uv)^{\beta}}\, \operatorname{d}\!v & \text{if } k=0,\\[4mm]
 \int\limits_0^{2^{-(k+2)}} \frac{f(v)}{(u+v-uv)^{\beta}}\, \operatorname{d}\!v +\int\limits_{2^{-(k+2)}}^1  \frac{f(v)}{(u+v-uv)^{\beta}}\, \operatorname{d}\!v & \text{if } k=1,\\[4mm]
 \begin{array}{c}\int\limits_0^{2^{-(k+2)}} \frac{f(v)}{(u+v-uv)^{\beta}}\, \operatorname{d}\!v +\int\limits_{2^{-(k+2)}}^{2^{-(k-1)}}  \frac{f(v)}{(u+v-uv)^{\beta}}\, \operatorname{d}\!v \\ +\int\limits_{2^{-(k-1)}}^1 \frac{f(v)}{(u+v-uv)^{\beta}}\, \operatorname{d}\!v \end{array}& \text {if } k\ge 2.
\end{cases}
\end{equation*}
In the following,  we will call $\int\limits_0^{2^{-(k+2)}} \frac{f(v)}{(u+v-uv)^{\beta}}\, \operatorname{d}\!v$ the \emph{first} integral, $\int\limits_{2^{-(k+2)}}^{\min\{2^{-(k-1)},1\}} \frac{f(v)}{(u+v-uv)^{\beta}}\, \operatorname{d}\!v$ the \emph{second} integral, and $\int\limits_{2^{-(k-1)}}^1 \frac{f(v)}{(u+v-uv)^{\beta}}\, \operatorname{d}\!v$ the \emph{last} integral.

We represent and approximate the last integral similar to the second integral in the case $u\in[0,2^{-n}]$, with the same order of error.

We represent the first integral, i.e.,\ $\int\limits_0^{2^{-(k+2)}} \frac{f(v)}{(u+v-uv)^{\beta}}\, \operatorname{d}\!v$, in the form
$$\int\limits_0^{2^{-(k+2)}} \frac{f(v)}{(u+v-uv)^{\beta}}\, \operatorname{d}\!v = 
\int_0^{2^{-(k+2)}}\frac{f(v)}{u^\beta \left[1+v\,\left(\frac1{u}-1\right)\right]^\beta}\,\operatorname{d}\!v.$$
We approximate $\left[1+v\,\left(\frac1{u}-1\right)\right]^{\beta}$ by partial sums of the Taylor series in Eq.~\eqref{eq:taylor1} (where we set $x=v\,\left(\frac1{u}-1\right)=\left(\frac{v}{u}-v\right)$). Then we obtain
\begin{multline*}
\left |\int_0^{2^{-(k+2)}} f(v)\,u^{-\beta}\,\left[(1+v(1/u-1)]^{-\beta}-P_{n-1}(v(1/u-1)\right]\,\operatorname{d}\!v\right | \\ \le \frac{\Gamma(\beta +n)}{ \Gamma(\beta)\Gamma(n+1)}\cdot\frac1{2^n}\cdot u^{-\beta}\,\int_0^{2^{-(k+2)}} \left | f(v)\right|\,\operatorname{d}\!v\stackrel{(\star)}{\le} O\left( \frac{n^{\beta-1}}{2^{n-\beta(k+1)}}\,\int_0^{2^{-(k+2)}} \left | f(v)\right|\,\operatorname{d}\!v \right)\le O\left(\left\| f\right\|_p\cdot \frac{n^{\beta-1}}{2^{n+\left(\frac1{q}-\beta\right)(k+1)}}\right),
\end{multline*}
where in step $(\star)$ we used that $2^{-(k+1)}\le u\le 2^{-k}$ and the asymptotic formula in Eq.~\eqref{eq:Gammasym}, while the last estimate is due to an application of Jensen's inequality in the form $\phi\left(\int_a^b |f(x)|\,\operatorname{d}\!x\right)\le \frac1{b-a}\int_a^b \phi((b-a)\cdot |f(x)|)\,\operatorname{d}\!x$  with convex function $\phi(x)=x^p$ (and $a=0$, $b=2^{-(k+1)}$) yielding overall the estimate $\int_0^{2^{-(k+2)}} \left | f(v)\right|\,\operatorname{d}\!v \le 2^{-(k+1)(1-\frac1{p})}\cdot\|f\|_p$.

Finally, we consider the second integral, $\int\limits_{2^{-(k+2)}}^{\min\{2^{-(k-1)},1\}} \frac{f(v)}{(u+v-uv)^{\beta}}\, \operatorname{d}\!v$, as a function of $u$ and approximate it by partial sums of its Taylor series of order $n$ in the neighbourhood of the point $u_k=2^{-(k+1)}+2^{-(k+2)}$. 
Note that $2^{-(k+1)}<u_k<2^{-k}$. So let,
$$F(u)=\int_{2^{-(k+2)}}^{\min \{2^{-(k-1)},1\}}f(v)(u+v-uv)^{-\beta}\,\operatorname{d}\!v.$$
We calculate the Taylor series to $F(u)$ around $u_k$:
\begin{multline*}
F(u)=F(u_k)+F'(u_k)(u-u_k)+\frac{F''(u_k)(u-u_k)^2}{2!}+\ldots \\ +\frac{F^{(n-1)}(u_k)(u-u_k)^{n-1}}{(n-1)!}+\frac{F^{(n)}(\phi)(u-u_k)^n}{n!}
\end{multline*}
where $\phi$ is a real number between $u_k$ and $u$. Here,
\begin{equation}\label{eq:poly4}
Q_{n-1}(u-u_k) = F(u_k)+F'(u_k)(u-u_k)+\frac{F''(u_k)(u-u_k)^2}{2!}+\ldots +\frac{F^{(n-1)}(u_k)(u-u_k)^{n-1}}{(n-1)!}
\end{equation}
is the Taylor polynomial of degree $(n-1)$ in $(u-u_k)$ and therefore also $u$, while the remainder term (in the Lagrangian form) is
$$\frac{F^{(n)}(\phi)(u-u_k)^n}{n!}.$$

Using differentiation under the integral sign, we can calculate the derivatives $F^{(n)}(u)$:
\begin{multline*}
F^{(n)}(u)\ 
= \ \int\limits_{2^{-(k+2)}}^{\min \{2^{-(k-1)},1\}}f(v)\left[\beta(\beta+1)\cdots(\beta+n-1)\right]\,\frac{(1-v)^n}{(u+v-uv)^{\beta +n}}\,\operatorname{d}\!v \\ = \frac{\Gamma(\beta+n)}{\Gamma(\beta)}\,  \int\limits_{2^{-(k+2)}}^{\min \{2^{-(k-1)},1\}} f(v)\,\frac{(1-v)^n}{(u+v-uv)^{\beta+n}}\,\operatorname{d}\!v.
\end{multline*}

Therefore, we get
\begin{multline*}
F(u)-Q_{n-1}(u-u_k) \ =\ \frac{F^{(n)}(\phi)(u-u_k)^n}{n!} \\
= \ \frac{\Gamma(\beta+n)}{\Gamma(\beta)\,n!}\,(u-u_k)^n\, \int\limits_{2^{-(k+2)}}^{\min \{2^{-(k-1)},1\}} f(v)\,\frac{(1-v)^n}{(\phi+v-\phi v)^{\beta+n}}\,\operatorname{d}\!v.
\end{multline*}
Note that $\frac1{2^{k+1}}< \phi< \frac1{2^k}$, and thus
\begin{multline*}
\left|\int\limits_{2^{-(k+2)}}^{\min \{2^{-(k-1)},1\}} f(v)\,\frac{(1-v)^n}{(\phi+v-\phi v)^{\beta+n}}\,\operatorname{d}\!v \right|  
\le 
\int\limits_{2^{-(k+2)}}^{\min \{2^{-(k-1)},1\}}\left|f(v)\right|\frac{(1-v)^n}{\left(\frac{1-v}{2^{k+1}}+v\right)^{n}\,\left(\frac{1-v}{2^{k+1}}+v\right)^{\beta}}\,\operatorname{d}\!v\\
\le
2^{(k+1)n}\,\int\limits_{2^{-(k+2)}}^{\min \{2^{-(k-1)},1\}}\left|f(v)\right|\frac1{\left(\frac{1-v}{2^{k+1}}+v\right)^{\beta}}\,\operatorname{d}\!v 
\le 2^{(k+1)n}\,\int\limits_{2^{-(k+2)}}^{\min \{2^{-(k-1)},1\}}\frac{\left|f(v)\right|}{v^{\beta}}\,\operatorname{d}\!v.
\end{multline*}

By H\"{o}lder's inequality, we get for this last integral in the previous line:
\begin{equation*}
\int\limits_{2^{-(k+2)}}^{\min \{2^{-(k-1)},1\}}\frac{\left|f(v)\right|}{v^{\beta}}\,\operatorname{d}\!v \le  \|f\|_p\, \left(\int\limits_{2^{-(k+2)}}^{\min \{2^{-(k-1)},1\}}v^{-\beta q}\,\operatorname{d}\!v\right)^{\frac1{q}} 
 =  \begin{cases}
\|f\|_p\,\left[\left[\ln v\right ]_{v=2^{-(k+2)}}^{v=\min \{2^{-(k-1)},1\}}\right ]^{\frac1{q}} & \text{if } \beta q= 1,\\
\|f\|_p\,\left[\left[\frac{v^{1-\beta q}}{1-\beta q}\right ]_{v=2^{-(k+2)}}^{v=\min \{2^{-(k-1)},1\}}\right ]^{\frac1{q}} & \text{if } \beta q\neq 1.
\end{cases} 
\end{equation*}
We consider cases:
\begin{itemize}
\item If $\beta q=1$, then 
\begin{equation*}
\left[ \left[\ln v\right ]_{v=2^{-(k+2)}}^{v=\min \{2^{-(k-1)},1\}}\right ]^{\frac1{q}} = \left\{ \begin{array}{ll} (2\ln 2)^{\beta} & \text{if } k=0,\\ (3\ln 2)^{\beta} & \text{if } k>0,\end{array} \right\} \le (3\ln 2)^{\beta}.
\end{equation*}

\item If  $\beta q\neq1$, then we have for $k=0$ that 
\begin{equation*}
\left[\left[\frac{v^{1-\beta q}}{1-\beta q}\right ]_{v=2^{-(k+2)}}^{v=\min \{2^{-(k-1)},1\}}\right ]^{\frac1{q}}  =  \left(\frac{1-4^{\beta q-1}}{1-\beta q} \right)^{\frac1{q}} < \begin{cases} 
 \frac{1}{(1-\beta q)^{1/q}} & \text{if } 0<\beta q<1,\\
 \frac{4^{\beta-1/q}}{(\beta q-1)^{1/q}} & \text{if } \beta q>1,\\
\end{cases}
\end{equation*}
while for $k>0$ we have
\begin{multline*}
\left[\left[\frac{v^{1-\beta q}}{1-\beta q}\right ]_{v=2^{-(k+2)}}^{v=\min \{2^{-(k-1)},1\}}\right ]^{\frac1{q}} \ 
= \ \left[\frac{2^{3(1-\beta q)}-1}{(1-\beta q)2^{(k+2)(1-\beta q)}}\right ]^{1/q}\\
\le  \  \begin{cases} 
\frac{7^{1/q}}{(1-\beta q)^{1/q}\,2^{(k+2)(1/q-\beta)}} & \text{if } 0<\beta q<1,\\[4mm]
\frac{2^{(k+2)(\beta-1/q)}}{(\beta q-1)^{1/q}} & \text{if } \beta q>1.
\end{cases}
\end{multline*}
In either case, we obtain
$$ \left[\left[\frac{v^{1-\beta q}}{1-\beta q}\right ]_{v=2^{-(k+2)}}^{v=\min \{2^{-(k-1)},1\}}\right ]^{\frac1{q}}  \le \begin{cases} 
\frac{7^{1/q}}{(1-\beta q)^{1/q}\,2^{(k+2)(1/q-\beta)}} & \text{if } 0<\beta q<1,\\[4mm]
\frac{2^{(k+2)(\beta-1/q)}}{(\beta q-1)^{1/q}} & \text{if } \beta q>1,\\
\end{cases} $$
since $4^{\beta}>1$ and $\left(\frac74\right)^{1/q}>1$.
\end{itemize}

In total we now get (in the step $(\star)$ we use that $|u-u_k|\le 2^{-k}-2^{-(k+1)}-2^{-(k+2)}=2^{-(k+2)}$)
\begin{multline*}
\left| F(u)-Q_{n-1}(u-u_k) \right|   
=  \frac{\Gamma(\beta+n)}{\Gamma(\beta)\,\Gamma(n+1)}\,\left|u-u_k\right|^n\, \left|\int\limits_{2^{-(k+2)}}^{\min \{2^{-(k-1)},1\}} f(v)\,\frac{(1-v)^n}{(\phi+v-\phi v)^{\beta+n}}\,\operatorname{d}\!v \right| \\
\stackrel{(\star)}{\le}  \frac{\Gamma(\beta+n)}{\Gamma(\beta)\,\Gamma(n+1)}\,2^{-(k+2)n}\, \left|\int\limits_{2^{-(k+2)}}^{\min \{2^{-(k-1)},1\}} f(v)\,\frac{(1-v)^n}{(\phi+v-\phi v)^{\beta+n}}\,\operatorname{d}\!v \right| \\
\le \frac{\Gamma(\beta+n)}{\Gamma(\beta)\,\Gamma(n+1)}\,2^{-(k+2)n}\,2^{(k+1)n}\,\int\limits_{2^{-(k+2)}}^{\min \{2^{-(k-1)},1\}}\frac{\left|f(v)\right|}{v^{\beta}}\,\operatorname{d}\!v \\
\le\frac{\Gamma(\beta+n)}{\Gamma(\beta)\,\Gamma(n+1)}\,2^{-n}\, \|f\|_p\,\left(\int\limits_{2^{-(k+2)}}^{\min \{2^{-(k-1)},1\}}v^{-\beta q}\,\operatorname{d}\!v\right)^{\frac1{q}}\\
\le \begin{cases}
O\left(\|f\|_p\cdot {{}^{n^{\beta-1}}/_{2^{(k+2)(1/q-\beta)+n}}} \right) & \text{if } 0<\beta q<1,\\
O\left(\|f\|_p\cdot {}^{n^{\beta-1}}/_{2^{n}} \right) & \text{if } \beta q=1,\\
O\left(\|f\|_p\cdot {}^{{n^{\beta-1}}\cdot 2^{\beta(k+2)}}/_{2^{\frac1{q}(k+2)+n+1}} \right) & \text{if } \beta q>1,\\
\end{cases}
\end{multline*}
where in the last step we again made use of the asymptotic formula in Eq.~\eqref{eq:Gammasym}.

With the above reasoning and observing  that $(\frac1{q}-\beta)>0$ if $0<\beta q<1$, that $(k+2)\le (n+1)$ and $\beta<1+\frac1{q}$, this establishes the order with which 
$$ \left|\int\limits_{2^{-(k+2)}}^{\min \{2^{-(k-1)},1\}}f(v)(u+v-uv)^{-\beta}\,\operatorname{d}\!v-Q_{n-1}(u-u_k) \right|\ \to \ 0$$
as $n\to \infty$.

\subsection{Error of approximation, case $0<\beta q <1$.}\label{subsec:error1}

If $0<\beta q<1$, we approximate  $\widetilde{(K_{1-\beta}f)}$ using the $L^{\infty}$-norm. We have found that
\begin{itemize}
\item  for $u\in[0,2^{-n}]$, we can approximate  $\widetilde{(K_{1-\beta}f)}$ by a polynomial of order $n$ with order of error
\begin{multline*}
O\left(\|f\|_p\cdot 2^{-n\left(\frac1{q}-\beta\right)}\right)+O\left(\|f\|_p\cdot \frac{n^{\beta-1}}{2^n}\right) \\
= O\left(\|f\|_p\cdot 2^{-n\left(\frac1{q}-\beta\right)}+\|f\|_p\cdot 2^{-n-(1-\beta)\log_2 n}\right) 
= O\left(\|f\|_p\cdot 2^{-\kappa n}\right). 
\end{multline*}
where 
$$ \kappa < \min\left\{\frac1{q}-\beta,1\right\} = \frac1{q}-\beta=\alpha-\frac1{p}.$$
\item for $u\in[2^{-(k+1)},2^{-k}]$, $k=0,\ldots ,n-1$(noting that $k+1\le n$) we can approximate  $\widetilde{(K_{1-\beta}f)}$ with order of error
\begin{multline*}
O\left(\|f\|_p\cdot\frac{n^{\beta-1}}{2^{n+(\frac1{q}-\beta)(k+1)}}\right)+O\left(\|f\|_p\cdot {{}^{n^{\beta-1}}/_{2^{(k+2)(1/q-\beta)+n}}} \right) +O\left(\|f\|_p\cdot\frac{n^{\beta-1}}{2^n}\right)\\ \le 
O\left(\|f\|_p\cdot 2^{-n\left(1+\frac1{q}-\beta\right)-(1-\beta)\log_2 n}
+\|f\|_p\cdot 2^{-n-(1-\beta)\log_2 n}\right) 
= O\left(\|f\|_p\cdot 2^{-\hat{\kappa} n}\right)
\end{multline*}
where 
$$\hat{\kappa}<\min\left\{1+\frac1{q}-\beta,1\right\} = 1+\frac1{q}-\beta = \alpha+\frac1{q}=1+\alpha-\frac1{p}.$$
\end{itemize}
Overall, we approximate  $\widetilde{(K_{1-\beta}f)}$, and thus $(K_{\alpha}f)$, with order of error 
\begin{equation}\label{eq:app1}
O\left(\|f\|_p\cdot 2^{-\kappa n}\right)
\end{equation} 
for any positive constant $\kappa<\alpha-\frac1{p}$. 

\subsection{Error of approximation, case $\beta q = 1$.}\label{subsec:error2}

Because the approximation for $u\in[0,2^{-n}]$ has an error of order $O\left(\|f\|_p\cdot \left( \frac{\ln 3-\ln\left({2^{n}\,u}\right)}{1-2^{-n}} \right)^{1/q}\right)$ and $\lim\limits_{u\to 0^+} -\ln(2^n u) = +\infty$, the operator  $\widetilde{(K_{1-\beta}f)}$ cannot be approximated in $L^{\infty}[0,1]$, only in $L^r[0,1]$ for $1\le r<\infty$.

Since $x\mapsto x^r$ with $r>1$ is a convex function, we will use Jensen's inequality in the form
$(x+y)^r \le 2^{r-1}\,\left(x^r+y^r\right)$ respectively $(x+y+z)^r \le 3^{r-1}\,\left(x^r+y^r+z^r\right)$
in the following estimate. While we have calculated the order of the error of our approximation of  $\widetilde{(K_{1-\beta}f)}$ pointwise above, for a function $f\in L^r[0,1]$ we have
\begin{equation*}
\|f\|_r =  \left(\int_0^1 |f(x)|^r\,\operatorname{d}\!x\right)^{1/r}  =  \left(\int_0^{2^{-n}} |f(x)|^r\,\operatorname{d}\!x+\sum_{k=0}^{n-1} \int_{2^{-(k+1)}}^{2^{-k}} |f(x)|^r\,\operatorname{d}\!x\right)^{1/r}
\end{equation*}
(and we call $\int_0^{2^{-n}} |f(x)|^r\,\operatorname{d}\!x$ respectively $\int_{2^{-(k+1)}}^{2^{-k}} |f(x)|^r\,\operatorname{d}\!x$ the contributions of the intervals to the $L^r$-norm). The contributions to the order of the error are as follows:
\begin{itemize}
\item For $u\in[0,2^{-n}]$, we find that the contribution to the order of the error is less than or equal to 
\begin{equation*}
O\left(\|f\|^r_p\cdot\left[\int_0^{2^{-n}} \left( \frac{\ln 3-\ln\left({2^{n}\,u}\right)}{1-2^{-n}} \right)^{r/q}\, \operatorname{d}\!u\right]+\|f\|^r_p\cdot\left[\frac{n^{r\,(2\,\beta-1)}}{2^{r\,n}} \right] \cdot 2^{-n} \right). 
\end{equation*}
For the second term, note that due to the length of the interval being $2^{-n}$ and since the term in square brackets goes to zero as $n$ goes to infinity, its order is always less than $O(2^{-n})$.\\
We use the following formula for the (improper) integral (assuming $a,b>0$):
\begin{multline*}
\int_0^1 (a-b\,\ln x)^{\gamma}\,\operatorname{d}\!x \ 
= \ a^{\gamma} + b\,\int_0^1 (a-b\,\ln x)^{\gamma-1}\,\operatorname{d}\!x \\  \stackrel{(\star)}{=} \ a^{\gamma} + b\,a^{\gamma-1}+b^2\,a^{\gamma-2}+b^3\,a^{\gamma-3}+\ldots 
\le 
\frac{a^{\gamma+1}}{a-b},
\end{multline*}
where we note that the sum after $(\star)$ is a finite one if $r$ is a natural number (however, the expression $a^{\gamma+1}/(a-b)$ is still an upper bound in that case), and the sum converges if $\frac{b}{a}<1$ , i.e., $b<a$.  Using the substitutions $x=2^n \,u$ (thus, $\operatorname{d}\!u=2^{-n}\,\operatorname{d}\!x$), $a=\ln 3/(1-2^{-n})$, $b=1/(1-2^{-n})$ and $\gamma=r/q$, we therefore get (note that $a>b$ here)
\begin{multline*}
\int_0^{2^{-n}} \left( \frac{\ln 3-\ln\left({2^{n}\,u}\right)}{1-2^{-n}} \right)^{r/q}\, \operatorname{d}\!u  =  2^{-n}\,\int_0^1 \left(\frac{\ln 3}{1-2^{-n}} - \frac{1}{1-2^{-n}} \ln x\right)^{r/q}\,\operatorname{d}\!x \\ 
\le  2^{-n}\,\frac{(\ln 3)^{\frac{r}{q}+1}}{\ln 3 - 1} \cdot \frac{1-2^{-n}}{\left(1-2^{-n}\right)^{\frac{r}{q}+1}} 
\ \le\  
2^{-n}\,\frac{(2\,\ln 3)^{\frac{r}{q}+1}}{\ln 3 - 1}.
\end{multline*} 
Overall, the contribution for the interval $[0,2^{-n}]$ to the order of error is thus less than or equal to $O(2^{-n})$.
\item For $u\in\left[2^{-(k+1)},2^{-k}\right]$, $k=0,\ldots,n-1$, we have that the contribution to the order of the error is less than or equal to (note that $2^{-(k+1)}$ is the interval length)
\begin{multline*}
O\left( \left[\left[\frac{n^{\beta-1}}{2^{n+(\frac1{q}-\beta)(k+1)}}\right]^r +  \left[\frac{n^{\beta-1}}{2^{n}}\right]^r + \left[\frac{k^{\beta}\,n^{\beta-1}}{2^n} \right]^r\right]\cdot 2^{-(k+1)}\cdot\|f\|^r_p \right) \\
\le O\left( \left[2^{-r\,(n+(1-\beta)\,\log_2n)} + 2^{-r\,(n+(1-\beta)\,\log_2n)}  +  2^{-r\,(n+1-2\beta)\,\log_2n)}  \right]\cdot 2^{-(k+1)} \cdot\|f\|^r_p\right) \\
\le O\left(2^{-r\,\hat{\kappa}\,n}\cdot 2^{-(k+1)}\cdot\|f\|^r_p \right),
\end{multline*}
for any positive constant $\hat{\kappa}<1$.
\end{itemize}
Overall, we approximate $\widetilde{(K_{1-\beta}f)}$ in $L^{r}[0,1]$ with order of error
\begin{multline}\label{eq:app2}
O\left(\left[2^{-n}+2^{-r\,\hat{\kappa}\,n}\sum_{k=0}^{n-1} 2^{-(k+1)}\right]^{1/r}\cdot\|f\|_p\right) =O\left(\left[2^{-n}+2^{-r\,\hat{\kappa}\,n}\left(1- 2^{-n}\right)\right]^{1/r}\cdot\|f\|_p\right) \\ =O\left(\left[2^{-n}+2^{-r\,\hat{\kappa}\,n}\right]^{1/r}\cdot\|f\|_p\right) \le O\left(2^{-\kappa\,n}\cdot\|f\|_p\right)
\end{multline}
(in the last step we used that $x\mapsto x^{1/r}$ is a strictly increasing function) for any positive constant $\kappa<\min\{\frac1{r},1\}$. Note that this holds for any $1\le r<\infty$ -- so the approximation might in general just fail to be in $L^{\infty}[0,1]$ but belongs to any other $L^r[0,1]$.

\subsection{Error of approximation, case $\beta q >1$.}\label{subsec:error3}

 Because the approximation for $u\in[0,2^{-n}]$ has an error of order $O\left(2^{-n/q}\,u^{-\beta}\cdot\|f\|_p\right)$ and $\lim\limits_{u\to 0^+} u^{-\beta} = +\infty$, the operator $\widetilde{(K_{1-\beta}f)}$ cannot be approximated in $L^{\infty}[0,1]$, only in $L^r[0,1]$ for some appropriate $1\le r<\infty$.

As before, we will use Jensen's inequality to estimate the order of the error in $L^r[0,1]$-norm form our pointwise estimates. Here, we get the following contributions:
\begin{itemize}
\item  For $u\in[0,2^{-n}]$, we have that the contribution to the order of the error is less than or equal to  
$$ O\left(\left[2^{-n\,r/q} \int_0^{2^{-n}} u^{-r\,\beta}\, \operatorname{d}\!u\right]\cdot\|f\|^r_p+\left[\frac{n^{-r(1-\beta)}}{2^{r\,n/q}} \right] \cdot 2^{-n} \cdot\|f\|^r_p\right). $$
Here we note that the (improper) integral only exists if $(-r\,\beta)>-1$ in which case we get
$$ \int_0^{2^{-n}} u^{-r\,\beta}\, \operatorname{d}\!u \ =\ \left.\frac{u^{1-r\,\beta}}{1-r\,\beta}\right|_{0}^{2^{-n}}=\frac1{1-r\,\beta}\,2^{-n(1-r\,\beta)}.$$
Thus the total contribution is
\begin{equation*}
 O\left(2^{-n\left(1-r\,\left[\beta-\frac1{q}\right]\right)}\cdot\|f\|^r_p+2^{-n\left(1+\frac{r}{q}\right)-r(1-\beta)\log_2 n}\cdot\|f\|^r_p\right) \le\ O\left(2^{-\widetilde{\kappa} n}\cdot\|f\|^r_p\right) 
\end{equation*}
for any positive number 
$$\widetilde{\kappa}<\min\left\{1-r\beta+\frac{r}{q},1+\frac{r}{q}\right\}=1-r\beta+\frac{r}{q} = 1-r\left(\frac1{p}-\alpha\right).$$ 
Recall that $\alpha<\frac1{p}$ and that $r$ has to be chosen such that $r\beta<1$, i.e., $r<1/(1-\alpha)$ respectively $\alpha>1-\frac1{r}$. Therefore, 
$$1-r\left(\frac1{p}-\alpha\right) > 1-\frac{1-p\alpha}{p(1-\alpha)} = \frac{p-1}{p(1-\alpha)} \ge 0 $$
since $0<\alpha<1$ and $p\ge 1$. So, $\widetilde{\kappa}$ is indeed bounded by a positive number.
\item For $u\in\left[2^{-(k+1)},2^{-k}\right]$, $k=0,\ldots,n-1$, we have that the contribution to the order of the error is less than or equal to (note that $2^{-(k+1)}$ is the interval length)
\begin{multline*}
O\left( \left[\left[\frac{n^{\beta-1}}{2^{n+(\frac1{q}-\beta)(k+1)}}\right]^r +  \left[\frac{{n^{\beta-1}}\cdot 2^{\beta(k+2)}}{2^{\frac1{q}(k+2)+n+1}} \right]^r + \left[\frac{n^{\beta-1}}{2^{n-k}\,2^{k/q}} \right]^r\right]\cdot 2^{-(k+1)}\cdot\|f\|^r_p \right) \\
\le O\left(2^{-r\hat{\kappa}n}\cdot 2^{-(k+1)}\cdot\|f\|^r_p\right),
\end{multline*}
for any positive constant $\hat{\kappa}<\min\left\{1+\frac1{q}-\beta,\frac1{q}\right\}=\frac1{q}=1-\frac1{p}$.
\end{itemize}
Overall, we approximate $\widetilde{(K_{1-\beta}f)}$ respectively $(K_{\alpha}f)$ in $L^r[0,1]$ where $r<\frac1{1-\alpha}$  (respectively $\alpha>1-\frac1{r}$) with order of error
\begin{multline}\label{eq:app3}
O\left(\left[2^{-\widetilde{\kappa}\,n}+2^{-r\,\hat{\kappa}\,n}\sum_{k=0}^{n-1} 2^{-(k+1)}\right]^{1/r}\cdot\|f\|_p\right)  =O\left(\left[2^{-\widetilde{\kappa}\,n}+2^{-r\,\hat{\kappa}\,n}\left(1- 2^{-n}\right)\right]^{1/r}\cdot\|f\|_p\right) \\ =O\left(\left[2^{-\widetilde{\kappa}n}+2^{-r\,\hat{\kappa}\,n}\right]^{1/r}\cdot\|f\|_p\right) \ \le\ O\left(2^{-\kappa\,n}\cdot\|f\|_p\right)
\end{multline}
for any positive constant 
$$\kappa<\min\left\{\frac1{r}-\frac1{p}+\alpha,1-\frac1{p}\right\}=1-\frac1{p}.$$ 
Here we note that $\frac1{r}-\frac1{p}+\alpha>1-\frac1{p}$ since $r<\frac1{1-\alpha}$. 

\subsection{Dimension of the approximation.}\label{subsec:dim}

Now that we have obtained the order of the error of the approximation, we have to determine the dimension of the subspace of $L^r[0,1]$ respectively $L^{\infty}[0,1]$ that we use in this approximation.
\begin{itemize}
\item For $u\in[0,2^{-n}]$, the approximation is a polynomial of degree $(n-1)$ in $u$, see the second integral in Section~\ref{subsec:case1}. Thus it has the form 
$$c_0+c_1\,u+\ldots+c_{n-1}\,u^{n-1},$$
for some constants $c_i\in\mathbb{R}$. Obviously, the subspace has dimension $n$.
\item Let $k=0,1,\ldots, n-1$. For $u\in(2^{-(k+1)},2^{-k}]$ the approximation obtained has the form  
$$a^{(k)}_0+a^{(k)}_1\,u+\ldots+a^{(k)}_{n-1}\,u^{n-1}+\frac{b^{(k)}_0}{u^{\beta}}+ \frac{b^{(k)}_1}{u^{\beta+1}}+\ldots+\frac{b^{(k)}_{n-1}}{u^{\beta+n-1}},$$
for some constants $a^{(k)}_i, b^{(k)}_i\in\mathbb{R}$; here, the polynomial coefficients $a^{(k)}_i$ come from the approximation by the second and the last integral in Section~\ref{subsec:case2}, while the coefficients $b^{(k)}_i$ come from the approximation by the first integral in Section~\ref{subsec:case2}. Noting that $0<\beta<1$, the subspace in question here has dimension $2n$.
\end{itemize}

Since on each of the $n$ intervals $(2^{-(k+1)},2^{-k}]$, $k=0,1,\ldots, n-1$, the subspace we used for the approximation has dimension $(2n)$, and for the interval $[0,2^{-n}]$ the subspace has dimension $n$, our ``piecewise-smooth approximation'' has dimension $n+n\cdot (2n)=2n^2+n$ (also compare to \cite[p.~29]{BS}).


Thus taking into account the dimension of the subspaces and using the original operator $K_{\alpha}$ again (and noting that $\alpha+\beta=1$ and $\frac1{p}+\frac1{q}=1$), we obtain from Equations~\eqref{eq:app1}, \eqref{eq:app2} and \eqref{eq:app3} that
\begin{eqnarray*}
d_{2n^2+n}\left(K_{\alpha}: L^p[0,1]\to L^{\infty}[0,1]  \right) & \le\ O\left( 2^{-\kappa_1\, n}\cdot\|f\|_p \right) & \text{if }\frac1{p}<\alpha<1, \\
d_{2n^2+n}\left(K_{\alpha}: L^p[0,1]\to L^{r}[0,1]  \right) & \le\ O\left( 2^{-\kappa_2\, n} \cdot\|f\|_p\right) & \text{if }\alpha=\frac1{p},\ 1\le r<\infty, \\
d_{2n^2+n}\left(K_{\alpha}: L^p[0,1]\to L^{r}[0,1]  \right) & \le\ O\left( 2^{-\kappa_3\,n}\cdot\|f\|_p \right) & \text{if }1-\frac1{r}<\alpha<\frac1{p}, 
\end{eqnarray*}
for some positive constants $\kappa_1=\kappa_1(\alpha,p)$, $\kappa_2=\kappa_2(r)$ and $\kappa_3=\kappa_3(p)$.

Finally, noting that $m=2n^2+n$ for $m,n>0$ implies
\begin{equation*}
n=\frac14\left(\sqrt{8m+1}-1\right) \le \frac{\sqrt{m}}{\sqrt{2}}-\frac14+\frac1{16}\,\frac1{\sqrt{2m}} 
\end{equation*}
by Bernoulli's inequality, we have now established the main theorem.\qed

Our proof here parallels the proof in \cite[Section 2]{BL} with the necessary changes and adding a few more details (e.g., by carefully considering the cases $k=0$, $k=1$ and $k\ge 2$ in Section~\ref{subsec:case2}). Given the similarity of the integral operators $S_{\alpha}$ (considered in \cite{BL}) and $K_{\alpha}$, it is maybe not a big surprise that the results are similar, especially since we used similar methods to establish them. Informally, we can justify the use of this method for $K_{\alpha}$ as follows: Since the kernel of $\widetilde{K_{1-\beta}}$ has a singularity at $(0,0)$, the approximation will be ``worst'' near $0$. Thus, as the order of approximation increases, the goal is to make this part of ``bad'' approximation near $0$ in such a way smaller that the contributions to the Kolmogorov widths from this part near $0$ and the remaining part are of the same order; this is here achieved by considering the interval $[0,2^{-n}]$ and the remaining intervals. The example in Section~\ref{sec:example}, also see Fig.~\ref{fig:case1n}, should make this remark clearer.

\section{Remark on Entropy Numbers}\label{sec:entropy}

While the Kolmogorov widths of a set $A$ characterize the error of approximation of $A$ by $n$-dimensional subspaces, the notion of \emph{metric entropy} -- also introduced by Kolmogorov, see \cite{Kol56,Kol58, KT59, Vit61} -- characterises how well one can approximate a compact set $A$ by finite sets: For a given set $A\subset X$ in a metric space $X$, a family $U_1, U_2,\ldots$ of subsets of $X$ is an \emph{$\varepsilon$-covering of $A$} if the radius of each $U_k$ does not exceed $\varepsilon$ and if the sets $U_k$ cover $A$. Obviously, for a given $\varepsilon>0$ and compact set $A$, a finite number of such sets $U_k$ suffices to cover $A$; we denote the minimal number of sets of radius $\varepsilon$ that cover $A$ by $N_{\varepsilon}(A)$. The logarithm
\begin{equation*}
H_{\varepsilon}(A) = \log N_{\varepsilon}(A)
\end{equation*}
is called the \emph{metric entropy} (or \emph{$\varepsilon$-entropy}) \emph{of the set $A$ in $X$}. One can restate this definition by saying that $N_{\varepsilon}(A)$ is the number of points in a \emph{minimal $\varepsilon$-net}; we also note that there is a closely related concept of \emph{$\varepsilon$-capacity} $C_{\varepsilon}(A) = \log M_{\varepsilon}(A)$ where $M_{\varepsilon}(A)$ denotes the number of points in a \emph{maximal $\varepsilon$-distinguishable set}, see \cite[Section 15.1]{LGM}. These two concepts are related by $C_{2\varepsilon}(A)\le H_{\varepsilon}(A)\le C_{\varepsilon}(A)$, see \cite[Theorem 10.1.1]{L} and \cite[Proposition 15.1.1]{LGM}.

We define \emph{(dyadic) entropy numbers} $e_n(A)$ of a set $A$ in a metric space $X$ by\footnote
{
	If the index $n$ starts with $1$, the definition
	\begin{equation*}
	e_n(A) = \inf\left\{\varepsilon\mathbin: \text{there exist}\ 2^{n-1}\ \text{closed balls in}\ X\ \text{of radius}\ \varepsilon\ 	
	\text{covering}\ A\right\}
	\end{equation*}
	is usually used in the literature.
}
\begin{equation*}
e_n(A) = \inf\left\{\varepsilon\mathbin: \text{there exist}\ 2^n\ \text{closed balls in}\ X\ \text{of radius}\ \varepsilon\ \text{covering}\ A\right\}.
\end{equation*}
In some sense,  entropy numbers are the inverse function to $H_{\varepsilon}(A)$; it follows directly from the definition that $e_n(A)=\varepsilon$ is equivalent to $H_{\varepsilon^+}(A) \le n \log 2 = \log 2^n < H_{\varepsilon}(A)$. As with Kolmogorov widths, we define entropy numbers of a linear operator $T:X\to Y$ acting between two Banach spaces $X$ and $Y$ by $e_n(T: X\to Y)=e_n(T(B_X))$ where $B_X$ denotes the unit ball of $X$; in other words, 
\begin{equation*}
e_n(T: X\to Y) = \inf\left\{\varepsilon\mathbin: \text{there is an}\ \varepsilon\text{-net for}\ T(B_X)\ \text{in}\ Y\ \text{consisting of}\ 2^n\ \text{elements}\right\}.
\end{equation*}
Properties of entropy numbers as well as their relation to approximation numbers like Kolmogorov widths can be found, e.g., in  \cite{CS90}, \cite{ET}, \cite[Chapter 12]{Pietsch} and \cite[Chapter 5]{Pisier}.

For a good estimate of the entropy numbers $e_n(K)$ one needs not just the Kolmogorov width $d_n(K)$, but the whole sequence $d_0(K),\ldots, d_n(K)$, compare \cite{Carl81} and \cite[Sections 15.4 \& 15.7]{LGM}. In fact, we use our upper estimate on the Kolmogorov widths to obtain a (lower) estimate on the metric entropy $H_{\varepsilon}$ by \cite[Theorem 2]{Lor} (also see \cite[Theorem 15.3.2]{LGM}) from which then an upper estimate for the entropy numbers follows via the remark above. As in \cite[Corollary 3.4]{BL2} and \cite[Corollaries 1.2 \& 3.4]{BL}, we obtain from the upper bound $d_n\le O(2^{-\kappa\sqrt{n}})$ the following estimate for the entropy numbers:

\begin{thm}
Let $K_{\alpha}$ be the integral operator 
$$(K_{\alpha}f)(x)= \int_0^1 (1-xy)^{\alpha -1}\,f(y)\,\operatorname{d}\!y\qquad\text{where}\quad 0<\alpha <1.$$ 
Then the entropy numbers of $K_{\alpha}$ are asymptotically bounded as follows:
\begin{eqnarray*}
e_{n}\left(K_{\alpha}: L^p[0,1]\to L^{\infty}[0,1]  \right) & \le\ O\left( 2^{-c_1 \sqrt[3]{n}} \right) & \text{if }\frac1{p}<\alpha<1, \\
e_{n}\left(K_{\alpha}: L^p[0,1]\to L^{r}[0,1]  \right) & \le\ O\left( 2^{-c_2 \sqrt[3]{n}} \right) & \text{if }\alpha=\frac1{p},\ 1\le r<\infty, \\
e_{n}\left(K_{\alpha}: L^p[0,1]\to L^{r}[0,1]  \right) & \le\ O\left( 2^{-c_3 \sqrt[3]{n}} \right) & \text{if }1-\frac1{r}<\alpha<\frac1{p},
\end{eqnarray*}
for some positive constants $c_1$, $c_2$ and $c_3$.
\end{thm}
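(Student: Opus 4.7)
The plan is to convert the Kolmogorov-width bounds of Theorem~\ref{thm:main} into an upper bound on the metric entropy $H_{\varepsilon}\bigl(K_{\alpha}(B_{L^p[0,1]})\bigr)$, and then to invert this estimate using the elementary equivalence $e_n(A)=\varepsilon \iff H_{\varepsilon^+}(A)\le n\log 2 < H_{\varepsilon}(A)$ recorded just above the statement. In each of the three regimes we exploit exactly the same chain of inequalities; only the constant $\kappa_i$ and the target space change, so one can treat the three cases uniformly.

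First, by Theorem~\ref{thm:main}, for each regime there is a positive $\kappa$ and a constant $C_0$ such that $d_k\bigl(K_\alpha\bigr)\le C_0\,2^{-\kappa\sqrt{k}}$ for every $k\ge 0$. Second, we invoke Lorentz's result \cite[Theorem~2]{Lor} (equivalently \cite[Theorem~15.3.2]{LGM}), which controls the metric entropy of a precompact set in a Banach space by a telescoping expression in its Kolmogorov widths; the conclusion we need is the standard corollary that, whenever $n$ is chosen so that $d_n(A)\le \varepsilon/2$, one has $H_{\varepsilon}(A)\le c\, n\,\log_2\!\bigl(d_0(A)/\varepsilon+2\bigr)$. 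Choosing $n$ to be the smallest integer with $C_0\,2^{-\kappa\sqrt{n}}\le\varepsilon/2$ gives $n\le c'(\log_2(1/\varepsilon))^{2}/\kappa^{2}$, and substituting back yields the key intermediate bound
\begin{equation*}
H_{\varepsilon}\bigl(K_\alpha(B_{L^p[0,1]})\bigr)\ \le\ C\,\bigl(\log_2(1/\varepsilon)\bigr)^{3}
\end{equation*}
for all sufficiently small $\varepsilon>0$, with a constant $C$ depending only on $\kappa$ and on the regime.

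Third, we convert this entropy bound back into an entropy-number bound: by the equivalence above, $e_n\le\varepsilon$ as soon as $H_{\varepsilon}\le n\log 2$; solving the cubic inequality $C(\log_2(1/\varepsilon))^{3}\le n\log 2$ for $\varepsilon$ produces $\varepsilon\le 2^{-c_i\sqrt[3]{n}}$ for a suitable positive constant $c_i$, which is precisely the asserted bound. Choosing $c_i$ strictly below the value extracted from $\kappa_i$ absorbs the lower-order corrections in the Lorentz inequality.

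The main obstacle, as in \cite[Corollary~3.4]{BL2} and \cite[Corollaries~1.2 \& 3.4]{BL}, is the optimization step that yields the exponent $1/3$ from the exponent $1/2$ in the Kolmogorov widths: the cube arises because the width bound $2^{-\kappa\sqrt{n}}$ forces $n\asymp(\log_2(1/\varepsilon))^{2}$, which, multiplied by the additional factor $\log_2(1/\varepsilon)$ coming from the Lorentz inequality, produces an entropy bound of order $(\log_2(1/\varepsilon))^{3}$ and hence an entropy-number decay of order $2^{-c\,n^{1/3}}$. Beyond this optimization, the argument is pure bookkeeping: no new features of $K_\alpha$ beyond the widths estimate are used, so the three regimes are handled in parallel with the three corresponding values of $c_i$.
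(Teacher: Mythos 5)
Your argument is correct and reaches the conclusion by the same overall strategy as the paper---pass from the width bound $d_k\le C_0\,2^{-\kappa\sqrt{k}}$ of Theorem~\ref{thm:main} to a metric-entropy bound $H_{\varepsilon}\lesssim(\log_2(1/\varepsilon))^3$ and then invert via the equivalence $e_n\le\varepsilon \iff H_{\varepsilon}\le n\log 2$---but the middle step is carried out with a different, more elementary lemma. You use the single-subspace covering estimate: if $d_n(A)\le\varepsilon/2$ and $A$ is bounded, then $N_{\varepsilon}(A)\le (C d_0/\varepsilon)^n$, hence $H_{\varepsilon}(A)\le n\log_2(Cd_0/\varepsilon)$, and optimizing $n\asymp(\log_2(1/\varepsilon))^2/\kappa^2$ gives the cubic bound. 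The paper instead embeds $K_{\alpha}(B_{L^p[0,1]})$ in the approximation set $A(\Delta)$ with $\delta_n=2^{-\kappa\sqrt{n}}$ and applies the full Lorentz theorem (\cite[Theorem 2]{Lor}, \cite[Theorem 15.3.3(ii)]{LGM}), which expresses $H_{\varepsilon}(A(\Delta))$ two-sidedly as $N_1+\cdots+N_j$ with $N_i\asymp i^2/(\kappa\log 2)^2$ and $j\asymp\log_2(1/\varepsilon)$---again cubic. Your route needs only the easy one-sided direction and avoids the hypothesis $\delta_{\rho n}/\delta_n\to 0$; what the paper's route buys is a matching two-sided asymptotic for the enveloping set $A(\Delta)$ (though, since only $K_{\alpha}(B_{L^p[0,1]})\subset A(\Delta)$ is known, this still yields just an upper bound for $K_{\alpha}$ itself, exactly as in your version). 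One small attribution point: the covering estimate you quote is not literally the conclusion of Lorentz's Theorem 2, which concerns the sets $A(\Delta)$ and the sums $\sum_i N_i$, but it is a standard elementary fact and your use of it is sound.
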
 

\begin{proof}
Using the notation of \cite{Lor} and \cite[Section 15.3]{LGM}, let $\delta_n=2^{-\kappa\sqrt{n}}$ and denote the sequence of this numbers by $\Delta=\{\delta_0,\delta_1,\ldots\}$. Then, we have $N_i=\min\left\{k\mathbin:\delta_k\le e^{-i}\right\} = \lceil \frac{i^2}{(\kappa \log 2)^2} \rceil\approx \frac1{(\kappa \log 2)^2}\cdot i^2$. Given our linear integral operators $K_{\alpha}: L^p[0,1]\to L^r[0,1]$, we approximate $K_{\alpha}f$ by a finite dimensional subspace of $L^r[0,1]$ using the linearly independent functions $1, u,  u^{-\beta}, u^2,  u^{-\beta-1}, \ldots$, $u^{n-1}, u^{-\beta-n+1}$, compare Section~\ref{subsec:dim}. We denote the subspace spanned by the first $n$ by $Y_n$ (with $Y_0=\{0\}$), and define the distance from this $n$-dimensional subspace by $E_n(f)=\inf\{\|f-g\|\mathbin: g\in Y_n\}$. We define the \emph{approximation set} by
\begin{equation*}
A(\Delta) = \left\{f\in L^r[0,1]\mathbin: E_n(f)\le \delta_n,\ n=0,1,2,\ldots \right\};
\end{equation*} 
i.e., $A(\Delta)$ contains all elements in $L^r[0,1]$ that are not further than $\delta_n$ from the approximating subspace away (using the terminomology of \cite[Chapter 7]{DL}, the set $A(\Delta)$ is the \emph{ball of an approximation space}). In particular, we have that  $K_{\alpha}(B_{L^p[0,1]}) \subset A(\Delta)$, i.e., the set $A(\Delta)$ contains the image of the unit ball in $L^p[0,1]$ under $K_{\alpha}$. Geometrically, we might think of $A(\Delta)$ as a set obtained from a ball intersected with cylinder sets (along subspaces), which contains the ellipsoid $K_{\alpha}(B_{L^p[0,1]})$; \cite[Theorem 2]{Lor} (respectively, \cite[Theorem 15.3.2]{LGM}) then states how this set $A(\Delta)$ can be covered by balls of radius $\varepsilon$ in $L^r[0,1]$.

Since the numbers $\delta_n$ decrease to zero rapidly, we can apply \cite[Theorem 15.3.3(ii)]{LGM} -- a consequence of \cite[Theorem 2]{Lor} (also see \cite[Theorem 15.3.2]{LGM}) -- which states: For a given $0<\varepsilon<1$, let $j$ be defined by $e^{-(j-1)}<\varepsilon\le e^{-(j-2)}$, i.e., $j=\lfloor 2-\log\varepsilon\rfloor$. If $\delta_{\rho n}/\delta_n\to 0$ as $n\to\infty$ for each $\rho>1$, then $C_{\varepsilon}(A(\Delta)) \approx H_{\varepsilon}(A(\Delta))\approx N_1+\ldots+N_j$.

In our case, we have $\delta_{\rho n}/\delta_n = 2^{-\kappa (\sqrt{\rho}-1) \sqrt{n}}\to 0$ as $n\to\infty$ since $\kappa>0$ and $\sqrt{\rho}-1>0$, and $N_1+\ldots+N_j \approx \frac1{(\kappa \log 2)^2} \sum_{\ell=1}^j \ell^2 = \frac{j(j+1)(2j+1)}{6\,(\kappa \log 2)^2}$.  Thus, the asymptotic growth of the metric entropy $H_{\varepsilon}(A(\Delta))$ is cubic in $j=\lfloor 2-\log\varepsilon\rfloor$, in other words, $H_{2^{-n}}(A(\Delta))$ grows as $C\cdot n^3$ for some constant $C$, and thus the entropy number $e_{C\cdot n^3}$ as $2^{-n}$, which yields $e_n(A(\Delta)) \approx 2^{-c \sqrt[3]{n}}$ for some constant $c$.

We now repeat these calculations with the Kolmogorov widths $d_n$ in place of their upper bounds $\delta_n$. Then, the sets $N_i$ grow at least quadratically in $i$, and therefore the metric entropy $H_{\varepsilon}$ has at least cubic growth while the entropy numbers $e_n$ decrease to zero of order $2^{-c \sqrt[3]{n}}$ or faster. This establishes the claim.
\end{proof}

Generalizing the previous proof, we have actually shown that for a compact operator $K$ 
\begin{equation*}
d_n(K) \le O(\exp(-\kappa\,n^{1/q})) \quad \text{implies} \quad e_n(K) \le O(\exp(-c\, n^{1/(q+1)}))
\end{equation*}
for $q>0$ and some constants $\kappa,c>0$. We contrast this with \cite[Theorem 3.1]{BL2} which states that if  $a>0$ and $b\in\mathbb{R}$, then
\begin{equation*}
d_n(K) \le O(n^{-a} (\log n)^b) \quad \text{implies} \quad e_n(K) \le O(n^{-a} (\log n)^b),
\end{equation*}
and general estimates like $d_n(K)\le n\cot e_n(K)$ (\cite[Theorem 12.3.2]{Pietsch}), or a so-called Jackson-type inequality for an operator $K$ acting between Hilbert spaces reading $d_n(K) \le 2\, e_n(K)$ (see \cite[Formulae 2.2.12 \& 3.0.9]{CS90}). 

In Proposition~\ref{prop:KSchatten}, we established that $K_{\alpha}: L^2[0,1]\to L^2[0,1]$ belongs to the Schatten class $S_p$ for every $0<p<\infty$; in particular, the sequence of singular values $\lambda_n$ is an $\ell^p$-sequence. By our upper bound, the entropy numbers are rapidly decreasing and thus also an $\ell^p$-sequence for any $p$, in accordance with the statement of \cite[Theorem 15.7.3]{LGM}: Given a compact linear operator $K: X\to X$ on a Hilbert space $X$ and $p>0$, then $(e_n(K))\in\ell^p$ iff $(\lambda_n)\in\ell^p$.

\section{Example}\label{sec:example}

We now look at the integral transformation of the following family of functions:
$$  f_{\nu\mu}(x) = (1-x)^{\nu-1}\,x^{\mu-1}, \qquad\text{i.e.,}\quad \widetilde{f_{\nu\mu}}(u) = u^{\nu-1}\,(1-u)^{\mu-1}.$$
Note that the function $\widetilde{f_{\nu\mu}}$ has a singularity at $u=0$ if $\nu<1$ and a singularity at $u=1$ if $\mu<1$. Furthermore, $\widetilde{f_{\nu\mu}}\in C[0,1]$ if $\nu\ge 1$ and $\mu\ge 1$, and $\widetilde{f_{\nu\mu}}\not\in L^1[0,1]$ if either $\nu\le 0$ or $\mu\le 0$. Thus, for our integral operator the cases where at least one of $\nu$ or $\mu$ is between $0$ and $1$ are interesting.

Using~\cite[Formula 3.179(8)]{GR}, we get 
\begin{equation}\label{eq:kfnumu}
(\widetilde{K_{1-\beta}f_{\nu\mu}})(u) =  \int_0^1 \frac{\widetilde{f_{\nu\mu}}(v)}{(1-u)^\beta \left(v+\frac{u}{1-u}\right)^\beta}\,\operatorname{d}\!v 
 = u^{-\beta} \cdot B(\mu,\nu) \cdot {}_2F_1\left(\beta,\nu;\mu+\nu;\frac{u-1}{u}\right),
\end{equation}
where $B$ denotes the beta function defined by $B(\mu,\nu)=\int_0^1 t^{\mu-1}(1-t)^{\nu-1}\,dt$ and satisfying $B(\mu,\nu)=\frac{\Gamma(\mu)\,\Gamma(\nu)}{\Gamma(\mu+\nu)}$ (see \cite[Formula 8.384(1)]{GR}), and ${}_2F_1$ the hypergeometric function defined by the following power series for $|z|<1$,  see \cite[9.101]{GR}:
\begin{multline*}
{}_2F_1(a,b;c;z) \ = \ 1 + \frac{a\cdot b}{c\cdot 1}\,z + \frac{a\,(a+1)\cdot b(b+1)}{c(c+1)\cdot 1\cdot 2}\,z^2 +  \frac{a\,(a+1)(a+2)\cdot b(b+1)(b+2)}{c(c+1)(c+2)\cdot 1\cdot 2\cdot 3}\,z^3 \\ +  \frac{a\,(a+1)(a+2)(a+3)\cdot b(b+1)(b+2)(b+3)}{c(c+1)(c+2)(c+3)\cdot 1\cdot 2\cdot 3\cdot 4}\,z^4 + \ldots. 
\end{multline*}
Since  ${}_2F_1(a,b;c;0)=1$, it follows immediately from Eq.~\eqref{eq:kfnumu} that 
\begin{equation*}
(\widetilde{K_{1-\beta}f_{\nu\mu}})(1) = B(\mu,\nu)=\frac{\Gamma(\mu)\,\Gamma(\nu)}{\Gamma(\mu+\nu)}.
\end{equation*}

Using the transformation formulae for the hypergeometric function in \cite[Formulae 9.132(1) \& (2)]{GR}, we obtain the following formulae which avoid having to work with the analytic continuation of ${}_2F_1$ explicitly for $u\in (0,1)$:
\begin{align*}
(\widetilde{K_{1-\beta}f_{\nu\mu}})(u) & =  
\frac{\Gamma(\mu)\Gamma(\nu-\beta)}{\Gamma(\mu+\nu-\beta)}\,  {}_2F_1\left(\beta,\mu;\beta-\nu+1;u\right) \notag \\ & \qquad + \ u^{\nu-\beta}\,\frac{\Gamma(\nu)\Gamma(\beta-\nu)}{\Gamma(\beta)}\, {}_2F_1\left(\nu,\mu+\nu-\beta;\nu-\beta+1;u\right)\label{eq:KFnumu}\\
& = \frac{\Gamma(\mu)\Gamma(\nu-\beta)\Gamma(\beta-\nu+1)\Gamma(1-\nu-\mu)}{\Gamma(\mu+\nu-\beta)\Gamma(1-\nu)\Gamma(1+\beta-\mu-\nu)}\, {}_2F_1\left(\beta,\mu;\mu+\nu;1-u\right)   \notag \\ & \qquad
 \quad + u^{\nu-\beta}\, \frac{\Gamma(\nu)\Gamma(\beta-\nu)\Gamma(\nu-\beta+1)\Gamma(1-\nu-\mu)}{\Gamma(\beta)\Gamma(1-\mu)\Gamma(1-\beta)}\, {}_2F_1\left(\nu,\mu+\nu-\beta;\mu+\nu;1-u\right).\notag
\end{align*}

From the convergence behaviour of ${}_2F_1$, see \cite[9.102]{GR}, we obtain from these calculations that $(\widetilde{K_{1-\beta}f_{\nu\mu}})(u)$ converges at $u=1$ (for $\mu,\nu>0$ and $0<\beta<1$); therefore, $(\widetilde{K_{1-\beta}f_{\nu\mu}})$ is in $C[0,1]$ if $\nu>\beta$, but diverges at $u=0$ and is continuous on $(0,1]$ if $\nu\le\beta$. In particular, we see that this integral operator ``smoothens'' the functions $f_{\nu\mu}$: the (possible) singularity at $u=1$ is no longer present in $(\widetilde{K_{1-\beta}f_{\nu\mu}})$, while the order of the (possible) singularity at $u=0$ is reduced from $u^{\nu-1}$ to $u^{\nu-\beta}$.

We now compare this result for $(\widetilde{K_{1-\beta}f_{\nu\mu}})(u)$ with the approximation used in the proof of Theorem~\ref{thm:main} in Section~\ref{sec:proof}. In Fig.~\ref{fig:case1n}, we set $\beta=\frac12=\alpha$ and consider $\widetilde{f_{1,\frac23}}(u)=(1-u)^{-1/3}$ which belongs to $L^p[0,1]$ for $p<3$. We have
\begin{equation*}
(\widetilde{K_{\frac12}f_{1,\frac23}})(u) = \frac32\,\sqrt{u}\cdot {}_2F_1\left(1,\frac76;\frac53;1-u\right)
\end{equation*}
and thus $(K_{\frac12}f_{1,\frac23})(x) = \frac32\,\sqrt{1-x}\cdot {}_2F_1(1,\frac76;\frac53;x)\in L^{\infty}[0,1]$ (note that $\frac32= \Gamma(\frac23)/\Gamma(\frac53) \le (K_{\frac12}f_{1,\frac23})(x) \le \sqrt{\pi}\cdot\Gamma(\frac23)/\Gamma(\frac76) \approx 2.587$ for $x\in[0,1]$). We then use Mathematica, see~\ref{sec:mathematica}, to obtain the approximation $\phi_n$ obtained in Section~\ref{sec:proof} for various values of $n$; e.g., for $n=2$ (used in the top left panel of Fig.~\ref{fig:case1n}) we get the approximation
\begin{equation*}
\phi_2(u) = \begin{cases}
1.870 - 1.341 u & \text{for}\ u\in [0,\frac14] \\
1.764-0.487 u +0.132 u^{-1/2} -0.004 u^{-3/2}  & \text{for}\ u\in (\frac14,\frac12] \\
1.458-0.225 u+0.278 u^{-1/2} -0.017 u^{-3/2} & \text{for}\ u\in (\frac12,1] \\
\end{cases}
\end{equation*}
(all numbers rounded to $3$ decimal places). In Fig.~\ref{fig:case1n} we show log-log-plots of the difference $(\widetilde{K_{\frac12}f_{1,\frac23}})(u)-\phi_n(u)$ for various values of $n$; note that in this logarithmic scale, the intervals $(2^{-(k+1)},2^{-k}]$ appear with the same ``length''. We observe how the approximation gets better as $n$ increases, on the one hand by decreasing the first interval $[0,2^{-n}]$ where the approximation is ``bad'', on the other hand by decreasing the difference overall on each interval.

\begin{figure}[tb]
\centerline{{\includegraphics[width=.4\textwidth,height=0.44\textwidth]{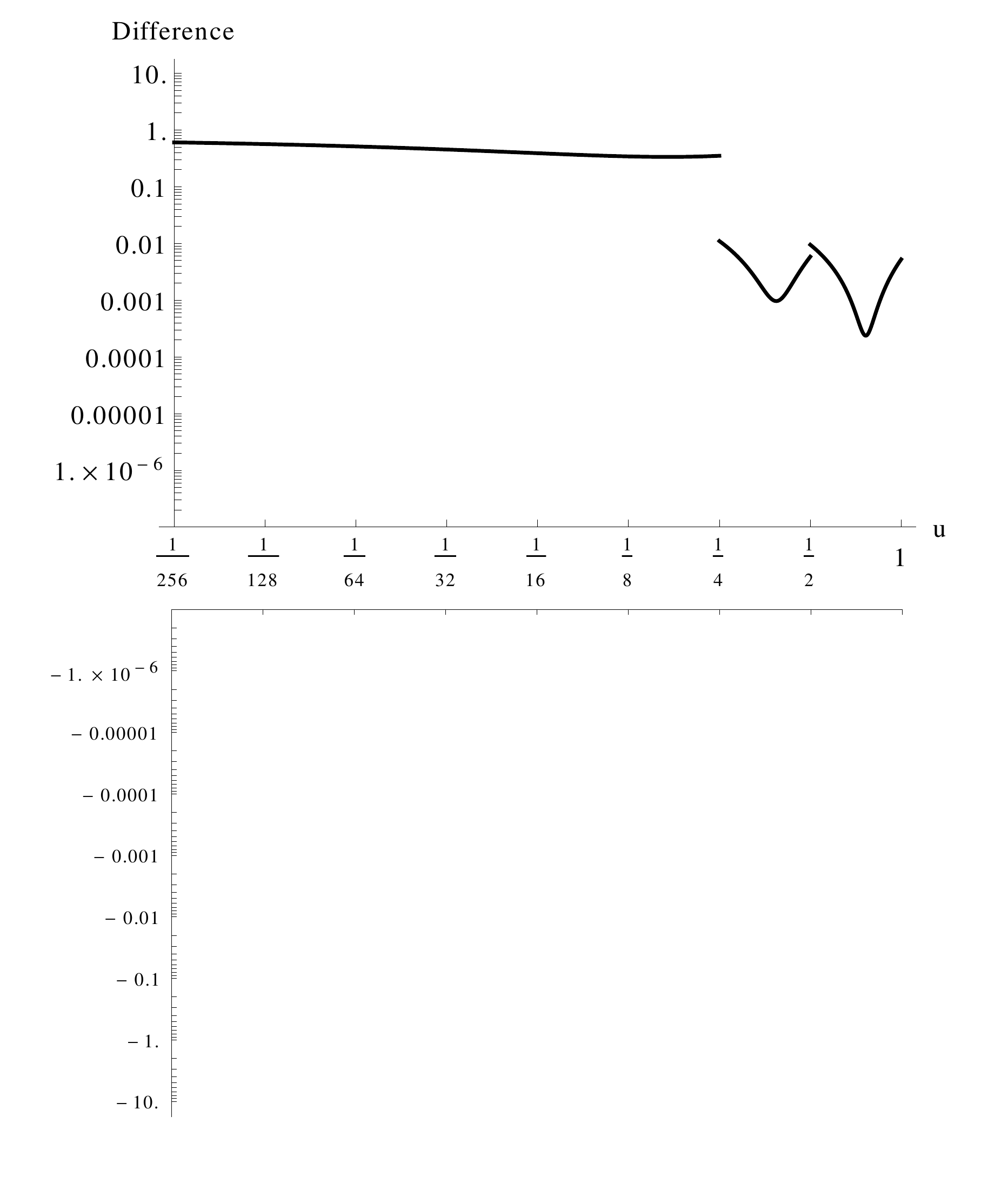}}\qquad{\includegraphics[width=.4\textwidth,height=0.44\textwidth]{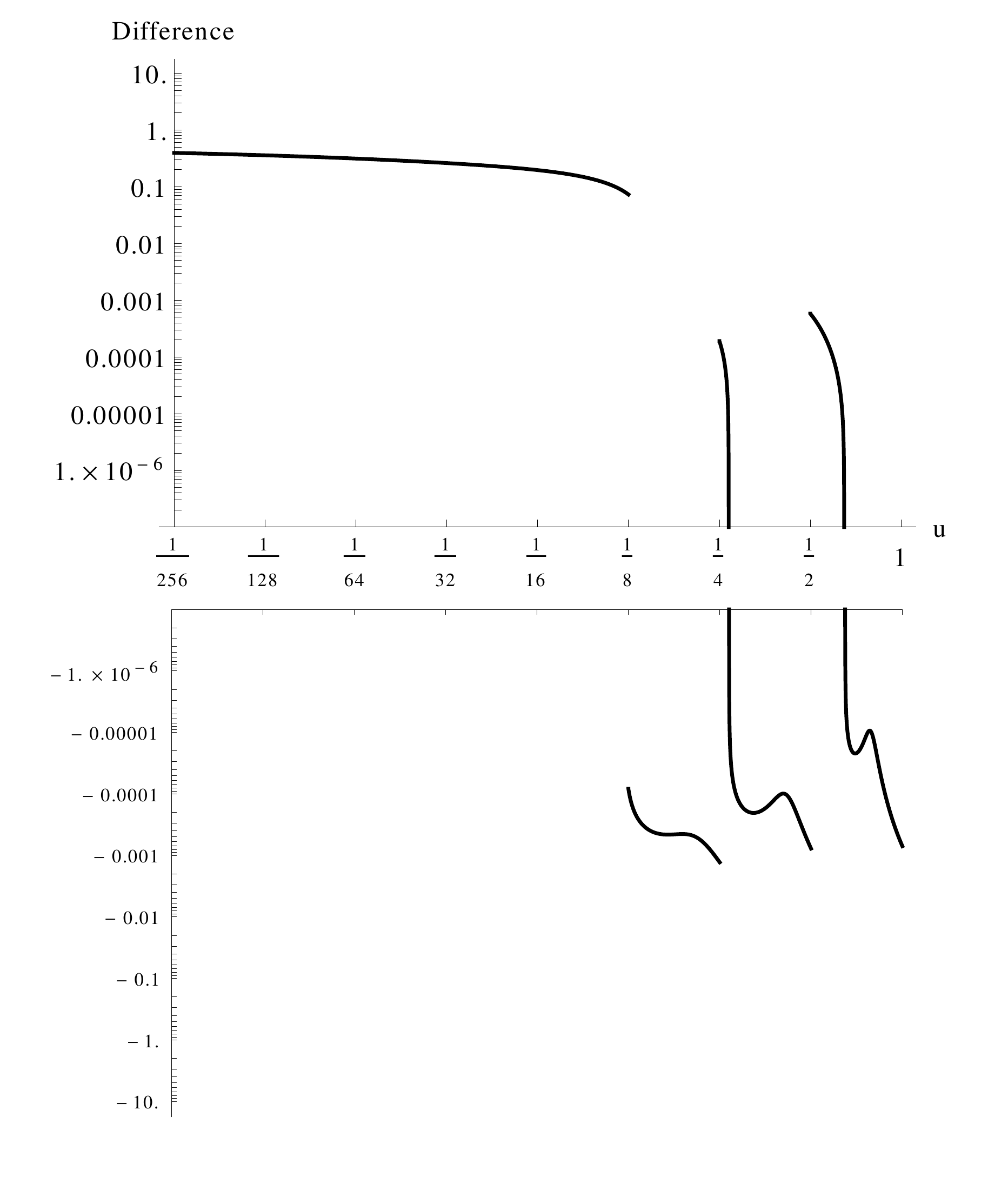}}}
\centerline{{\includegraphics[width=.4\textwidth,height=0.44\textwidth]{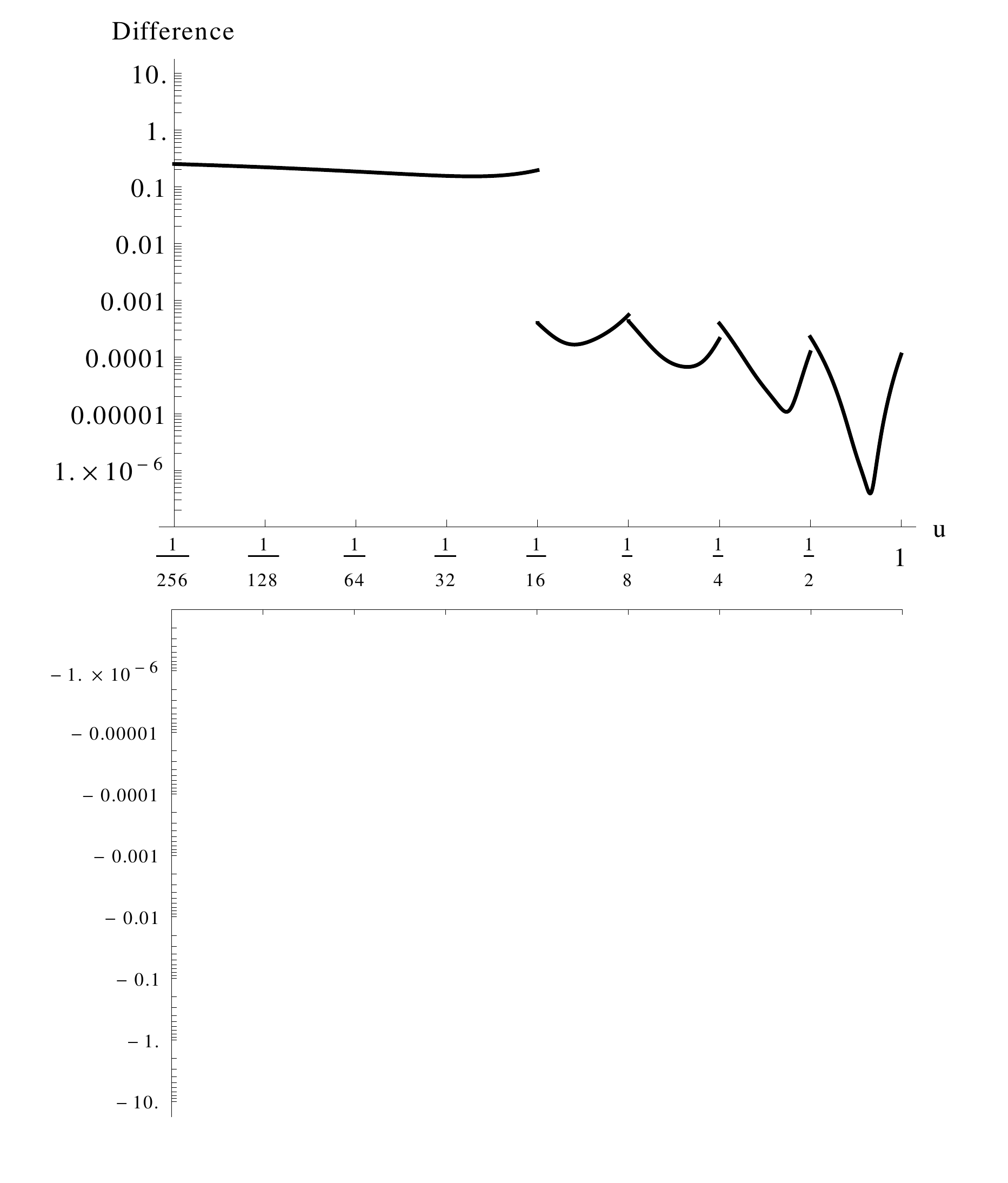}}\qquad{\includegraphics[width=.4\textwidth,height=0.44\textwidth]{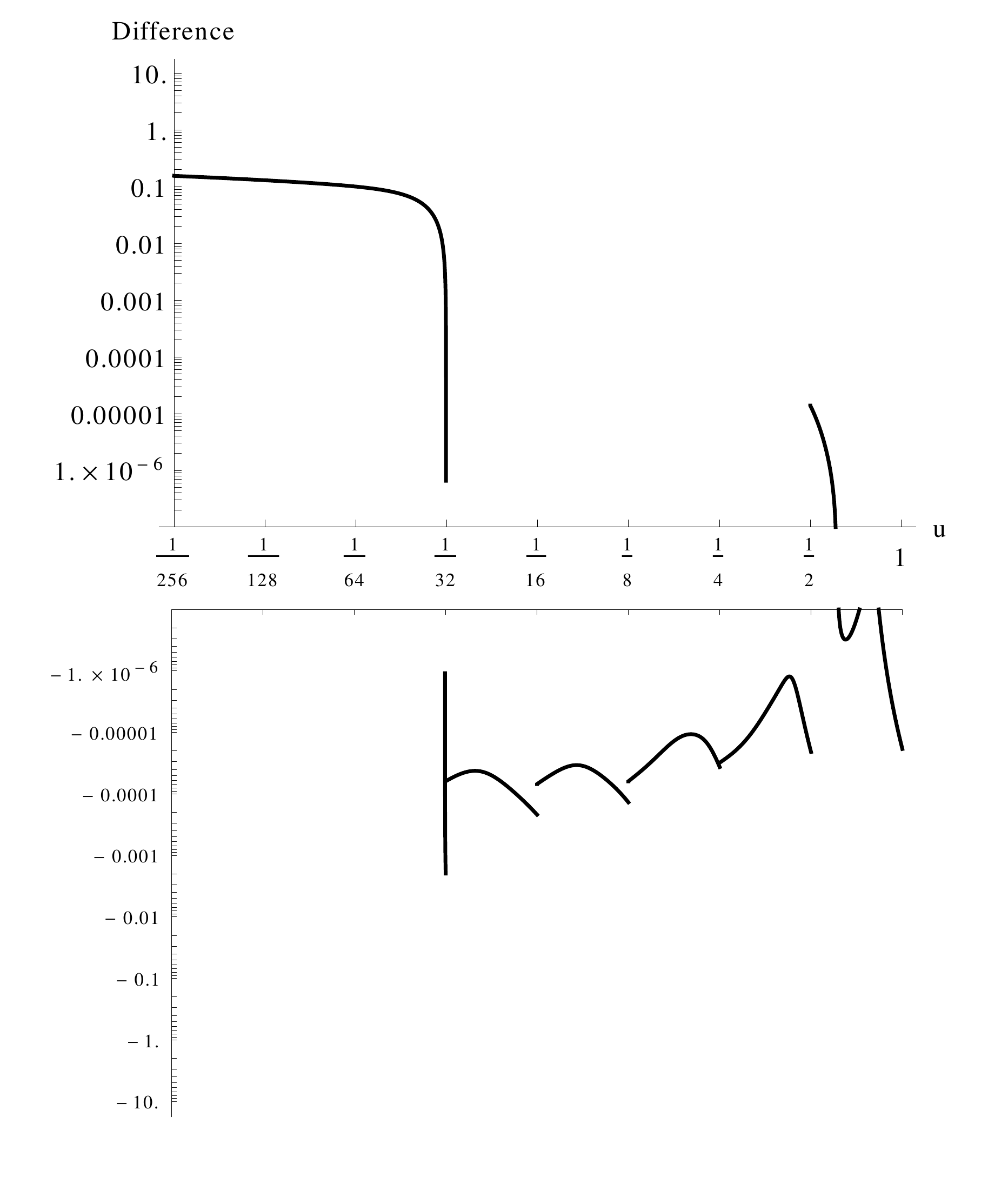}}}
\centerline{{\includegraphics[width=.4\textwidth,height=0.44\textwidth]{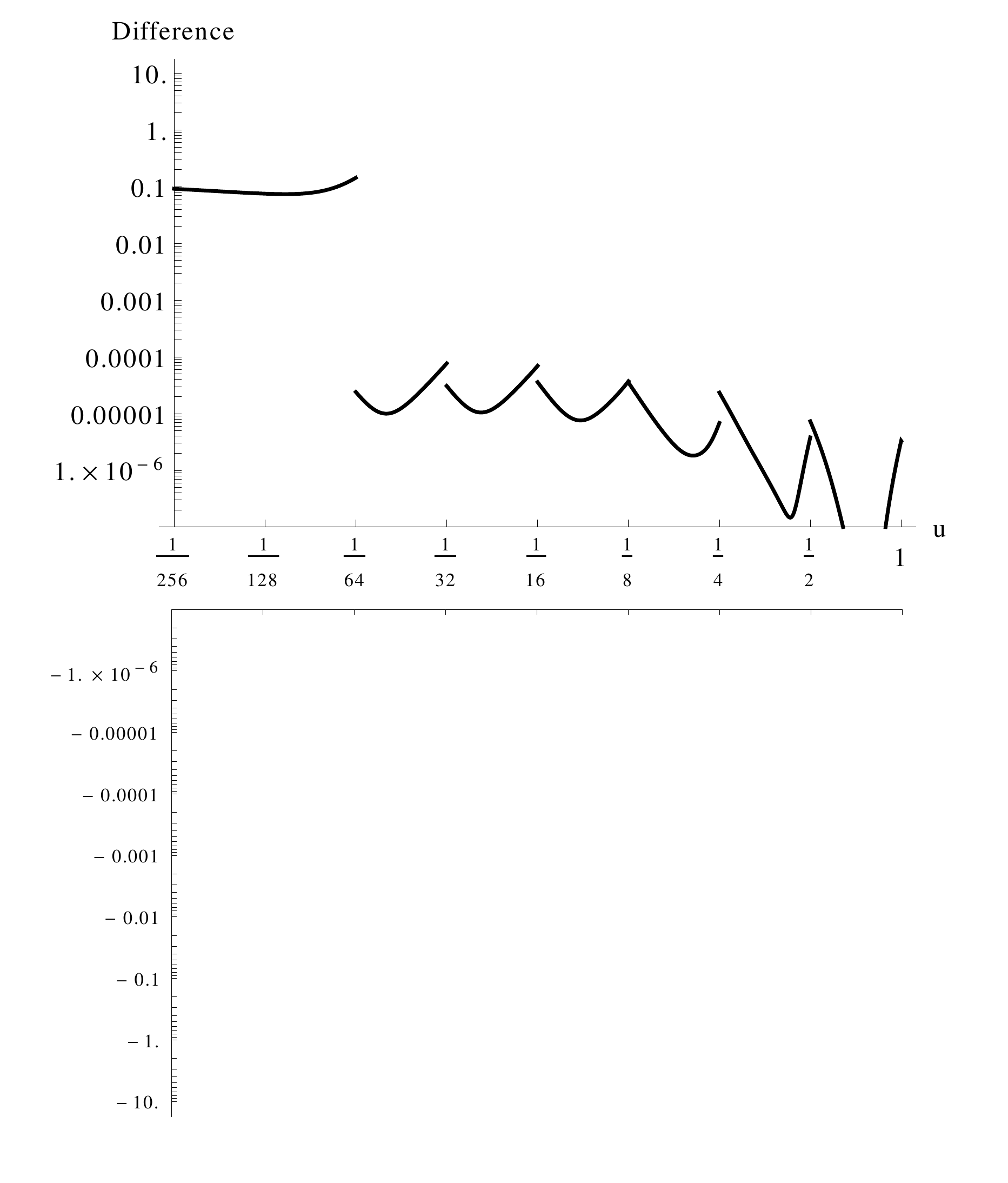}}\qquad{\includegraphics[width=.4\textwidth,height=0.44\textwidth]{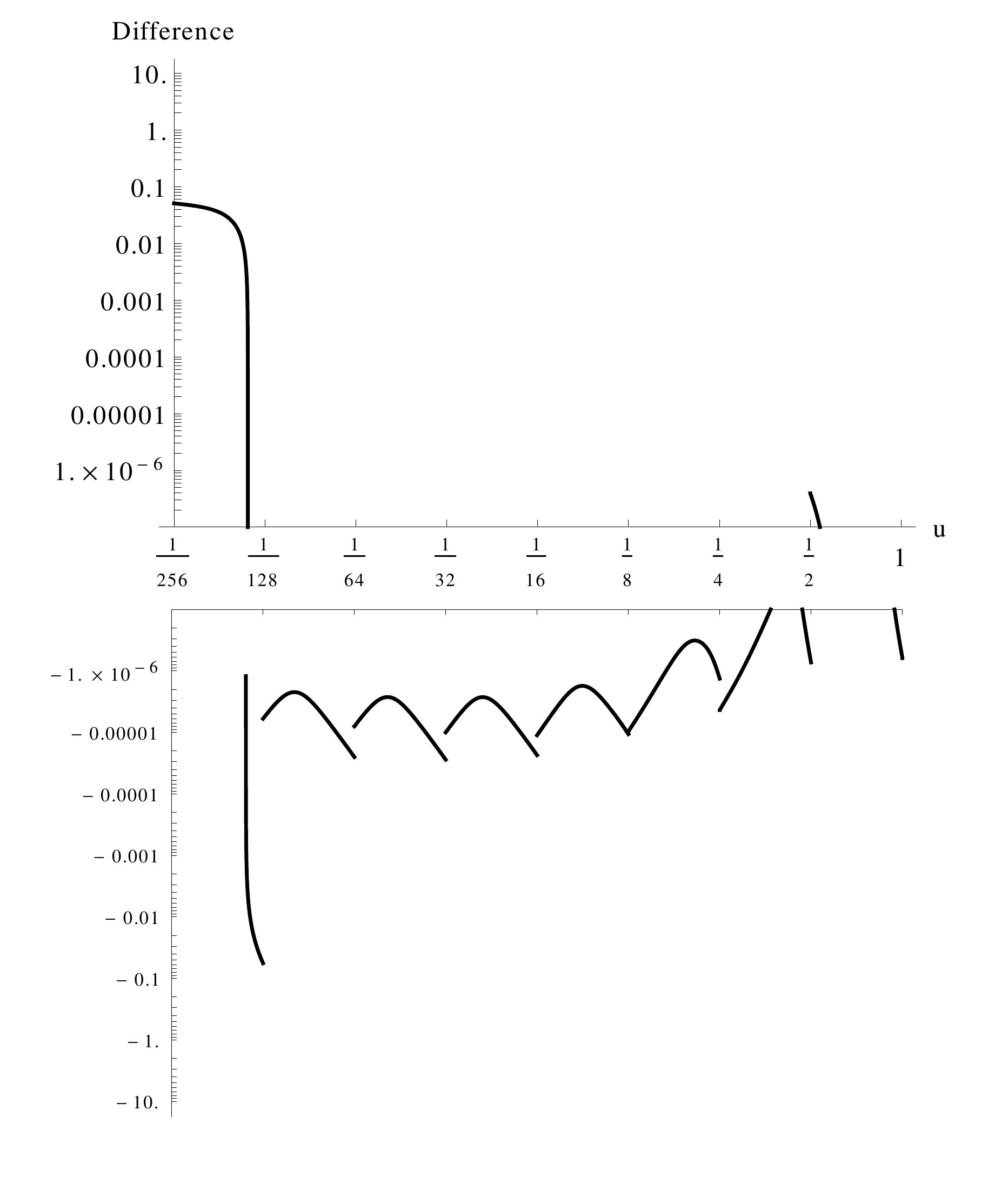}}}
\caption{Log-log plot of the difference $\left(\widetilde{K_{\frac12}f_{1,\frac23}}\right)(u)-\phi_n(u)$ where $\phi_n$ is the approximation we derived in this article, for $\beta=\frac12$ and $n=2$ (\textit{top, left}),  $n=3$ (\textit{top, right}),  $n=4$ (\textit{middle, left}),  $n=5$ (\textit{middle, right}),\  $n=6$ (\textit{bottom, left}), and $n=7$ (\textit{bottom, right}). \label{fig:case1n}}
\end{figure}

\section{Outlook and Conclusion}

The approximation we used to obtain upper bounds for the Kolmogorov widths with respect to $L^{\infty}[0,1]$ respectively $L^r[0,1]$ in Section~\ref{sec:proof} contain -- besides polynomial terms -- powers of the form $x^{\alpha-k}$ with $0<\alpha<1$ and $k\in\mathbb{N}$. While such powers don't have a good approximation by polynomials, they can be well approximated by rational functions, see \cite{N,B,G,V,S}; also compare \cite[Chapters 7 \& 8]{LGM}. In fact, the results in \cite{S} show that if we consider approximation by rational functions with both numerator and denominator of degree at most $n$, then the upper bound on the error of approximation of the rational best approximant of $x^\alpha$ with $\alpha>0$ is bounded by $O(e^{-c \sqrt{n}})$ for some constant $c>0$. Thus, combining this with our results on Kolmogorov widths indicates that we can approximate $K_{\alpha}$ by rational functions with the same asymptotic upper bounds on the  error of approximation. We will not do this here.

We obtained upper bounds on the Kolmogorov widths of $K_{\alpha}$; of course, the harder task is obtaining lower bounds. For the Hilbert-Schmid operator $K_{\alpha}: L^2[0,1]\to L^2[0,1]$, for which we calculated the operator norm exactly in Proposition~\ref{prop:HS1}, the Kolmogorov widths coincide with the singular values; furthermore, in this case $K_{\alpha}$ is a Schatten class operator, see Proposition~\ref{prop:KSchatten}. Can one use these facts to obtain lower bounds on the Kolmogorov widths at least in this case?

\appendix

\section{Mathematica Code}\label{sec:mathematica}

Mathematica will (try to) evaluate  $u\mapsto \left(\widetilde{K_{1-\beta}f}\right)(u)$ exactly using (here, with $\beta=\frac12$ and the constant function $f\equiv 1$):
\begin{verbatim}
> beta =1/2;
> f[x_]:=1;
> kf[u_] := Evaluate[Integrate[f[v] (u + v - u v)^(-beta), {v, 0, 1}]]
> Plot[kf[u], {u, 0, 1}]
\end{verbatim}

To calculate its approximation (denoted as function \texttt{app} in the following code), we use
\begin{itemize}
\item for $u\in [0,2^{-n}]$ (e.g., if we choose $n=5$):
\begin{verbatim} 
> n = 5; 
> p[x_] := Evaluate[Normal[Series[(1 + x)^(-beta), {x, 0, n - 1}]]]
> ser[u_, v_] := f[v] v^(-beta) p[(u - u*v)/v]
> app[u_] := Evaluate[Total[Table[NIntegrate[SeriesCoefficient[
         ser[u, v], {u, 0, k}], {v, 2^(-1 - n), 1}] u^k, 
         {k, 0, n - 1}]]]
\end{verbatim}
\item for each $k=0,\ldots, n-1$ and $u\in(2^{-(k+1)},2^{-k}]$ (e.g., for $k=2$):
\begin{verbatim} 
> k = 2
> ser1[z_, v_] := f[v] p[(v - v/z) z]
> app1[u_] := Evaluate[Total[Table[NIntegrate[SeriesCoefficient[
         ser1[z, v], {z, 0, j}], {v, 0, 2^(-k - 2)}] u^(-beta - j), 
         {j, 0, n - 1}]]]
> ff[u_] := Integrate[f[v]/(u + v - u v)^(beta),{v, 2^(-k - 2), 
         Min[2^(-k + 1), 1]}]
> du[u_] := Evaluate[Table[D[ff[u], {u, j}], {j, 0, n - 1}]];
> app2[u_] := Evaluate[Expand[du[2^(-k - 1) + 2^(-k - 2)].
         Table[(u - (2^(-k - 1) + 2^(-k - 2)))^j/j!, 
         {j, 0, n - 1}]]];
> ser3[u_, v_] := f[v] v^(-beta) p[(u - u*v)/v]
> app3[u_] := If[k > 1, Evaluate[Total[Table[NIntegrate[
          SeriesCoefficient[ser3[u, v], {u, 0, j}], 
          {v, 2^(1 - k), 1}] u^j, {j, 0, n - 1}]]], 0]
> app[u_]:=Evaluate[app1[u] + app2[u] + app3[u]]
\end{verbatim}
\end{itemize}

This code was used to calculate the difference of $(\widetilde{K_{\frac12}f_{1,\frac23}})(u)$ to its approximation in Fig.~\ref{fig:case1n}.

\section*{Acknowledgements}

Eduard Belinsky posed the question of estimating the Kolmogorov widths of $K_{\alpha}$ to the first author for his master thesis. Eduard Belinsky worked on estimates for entropy numbers \cite{Bel98,Bel02,BT} as well as Kolmogorov widths \cite{BL2,BL} and their relationship to each other \cite{TB}, also see \cite{Lacey}. Unfortunately, Eduard Belinsky passed away before this work was completed, which then only happened years later. This paper is our way to say thank you to Eduard Belinsky.

\end{document}